\documentclass[10pt,a4paper]{article}
\usepackage[utf8]{inputenc}
\usepackage[english]{babel}
\usepackage[T1]{fontenc}

\usepackage{amsmath}
\usepackage{amsthm}
\usepackage{amsfonts}
\usepackage{amssymb}
\usepackage{scrextend}
\usepackage[colorlinks=true, allcolors=blue]{hyperref}
\usepackage{cleveref}
\usepackage{graphicx}
\usepackage{enumerate}
\usepackage{lmodern}
\usepackage{wrapfig}
\usepackage{mathrsfs}
\usepackage[labelsep=period]{caption}
\usepackage{subcaption}
\usepackage[onelanguage,ruled,vlined,linesnumbered]{algorithm2e}
\usepackage{siunitx}
\usepackage{pdfpages}
\usepackage{float}
\usepackage[colorinlistoftodos]{todonotes}
\usepackage{comment}
\usepackage[parfill]{parskip}
\usepackage{bm}
\usepackage{xcolor}
\usepackage{colortbl}
\usepackage{makecell}
\usepackage{tablefootnote}

\usepackage{diagbox}

\usepackage{tikz}
\usetikzlibrary{shapes,graphs,graphs.standard}

\newcommand{\db}[2]{#1\!\leftrightarrow\!#2}
\newcommand{\ru}[1]{\textbf{R#1}}

\newtheorem{theorem}{Theorem}
\newtheorem{corollary}[theorem]{Corollary}
\newtheorem{definition}[theorem]{Definition}

\newtheorem{proposition}[theorem]{Proposition}
\crefname{proposition}{Proposition}{Propositions}
\newtheorem{lemma}[theorem]{Lemma}
\newtheorem{conjecture}[theorem]{Conjecture}

\crefname{claim}{claim}{claims}
\Crefname{claim}{Claim}{Claims}

\usepackage[left=2cm,right=2cm,top=2cm,bottom=2cm]{geometry}

\renewcommand{\thesubfigure}{\roman{subfigure}}
\DeclareMathOperator{\ad}{ad}
\DeclareMathOperator{\mad}{mad}

\usepackage{authblk}

\title{$2$-distance $(\Delta+2)$-coloring of sparse graphs}

\author[1]{Hoang La\thanks{xuan-hoang.la@lirmm.fr}}
\author[1]{Mickael Montassier\thanks{mickael.montassier@lirmm.fr}}
\affil[1]{LIRMM, Université de Montpellier, CNRS, Montpellier, France}
\begin{document}
  \maketitle

\begin{abstract}
A $2$-distance $k$-coloring of a graph is a proper $k$-coloring of the vertices where vertices at distance at most 2 cannot share the same color. We prove the existence of a $2$-distance ($\Delta+2$)-coloring for graphs with maximum average degree less than $\frac{8}{3}$ (resp. $\frac{14}{5}$) and maximum degree $\Delta\geq 6$ (resp. $\Delta\geq 10$). As a corollary, every planar graph with girth at least $8$ (resp. $7$) and maximum degree $\Delta\geq 6$ (resp. $\Delta\geq 10$) admits a $2$-distance $(\Delta+2)$-coloring.
\end{abstract}

\section{Introduction}

A \emph{$k$-coloring} of the vertices of a graph $G=(V,E)$ is a map $\phi:V \rightarrow\{1,2,\dots,k\}$. A $k$-coloring $\phi$ is a \emph{proper coloring}, if and only if, for all edge $xy\in E,\phi(x)\neq\phi(y)$. In other words, no two adjacent vertices share the same color. The \emph{chromatic number} of $G$, denoted by $\chi(G)$, is the smallest integer $k$ such that $G$ has a proper $k$-coloring.  A generalization of $k$-coloring is $k$-list-coloring.
A graph $G$ is {\em $L$-list colorable} if for a
given list assignment $L=\{L(v): v\in V(G)\}$ there is a proper
coloring $\phi$ of $G$ such that for all $v \in V(G), \phi(v)\in
L(v)$. If $G$ is $L$-list colorable for every list assignment $L$ with $|L(v)|\ge k$ for all $v\in V(G)$, then $G$ is said to be {\em $k$-choosable} or \emph{$k$-list-colorable}. The \emph{list chromatic number} of a graph $G$ is the smallest integer $k$ such that $G$ is $k$-choosable. List coloring can be very different from usual coloring as there exist graphs with a small chromatic number and an arbitrarily large list chromatic number.

In 1969, Kramer and Kramer introduced the notion of 2-distance coloring \cite{kramer2,kramer1}. This notion generalizes the ``proper'' constraint (that does not allow two adjacent vertices to have the same color) in the following way: a \emph{$2$-distance $k$-coloring} is such that no pair of vertices at distance at most 2 have the same color. The \emph{$2$-distance chromatic number} of $G$, denoted by $\chi^2(G)$, is the smallest integer $k$ such that $G$ has a 2-distance $k$-coloring. Similarly to proper $k$-list-coloring, one can also define \emph{$2$-distance $k$-list-coloring} and a \emph{$2$-distance list chromatic number}.

For all $v\in V$, we denote $d_G(v)$ the degree of $v$ in $G$ and by $\Delta(G) = \max_{v\in V}d_G(v)$ the maximum degree of a graph $G$. For brevity, when it is clear from the context, we will use $\Delta$ (resp. $d(v)$) instead of $\Delta(G)$ (resp. $d_G(v)$). 
One can observe that, for any graph $G$, $\Delta+1\leq\chi^2(G)\leq \Delta^2+1$. The lower bound is trivial since, in a 2-distance coloring, every neighbor of a vertex $v$ with degree $\Delta$, and $v$ itself must have a different color. As for the upper bound, a greedy algorithm shows that $\chi^2(G)\leq \Delta^2+1$. Moreover, that upper bound is tight for some graphs, for example, Moore graphs of type $(\Delta,2)$, which are graphs where all vertices have degree $\Delta$, are at distance at most two from each other, and the total number of vertices is $\Delta^2+1$. See \Cref{tight upper bound figure}.

\begin{figure}[htbp]
\begin{center}
\begin{subfigure}[t]{5cm}
\centering
\begin{tikzpicture}[every node/.style={circle,thick,draw,minimum size=1pt,inner sep=2}]
  \graph[clockwise, radius=1.5cm] {subgraph C_n [n=5,name=A] };
\end{tikzpicture}
\caption{The Moore graph of type (2,2):\\ the odd cycle $C_5$.}
\end{subfigure}
\qquad
\begin{subfigure}[t]{5cm}
\centering
\begin{tikzpicture}[every node/.style={circle,thick,draw,minimum size=1pt,inner sep=1}]
  \graph[clockwise, radius=1.5cm] {subgraph C_n [n=5,name=A] };
  \graph[clockwise, radius=0.75cm,n=5,name=B] {1/"6", 2/"7", 3/"8", 4/"9", 5/"10" };

  \foreach \i [evaluate={\j=int(mod(\i+6,5)+1)}]
     in {1,2,3,4,5}{
    \draw (A \i) -- (B \i);
    \draw (B \j) -- (B \i);
  }
\end{tikzpicture}
\caption{The Moore graph of type (3,2):\\ the Petersen graph.}
\end{subfigure}
\qquad
\begin{subfigure}[t]{5cm}
\centering
\includegraphics[scale=0.12]{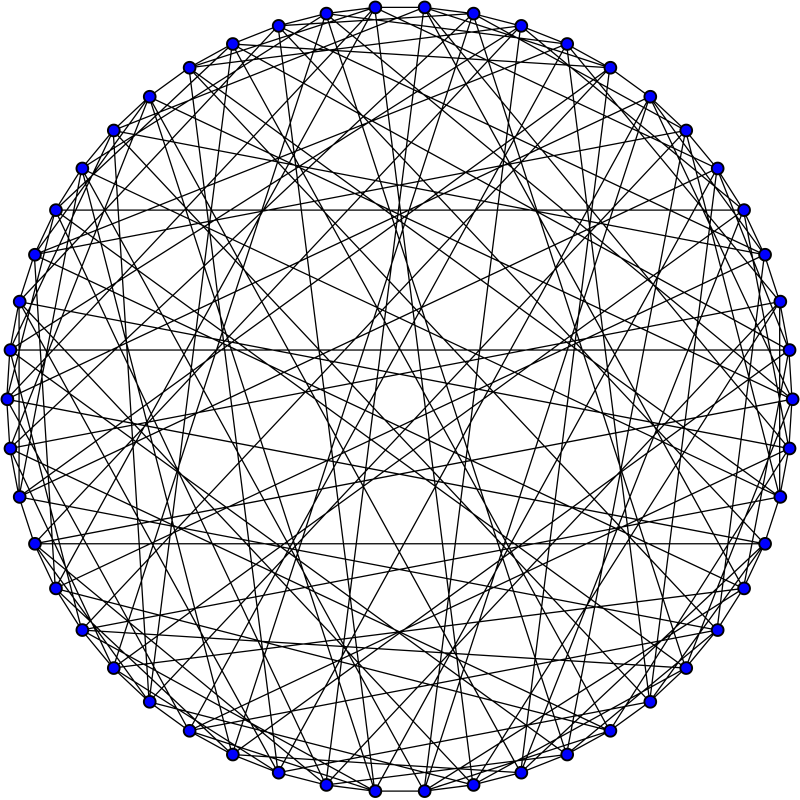}
\caption{The Moore graph of type (7,2):\\ the Hoffman-Singleton graph.}
\end{subfigure}
\caption{Examples of Moore graphs for which $\chi^2=\Delta^2+1$.}
\label{tight upper bound figure}
\end{center}
\end{figure}

By nature, $2$-distance colorings and the $2$-distance chromatic number of a graph depend a lot on the number of vertices in the neighborhood of every vertex. More precisely, the ``sparser'' a graph is, the lower its $2$-distance chromatic number will be. One way to quantify the sparsity of a graph is through its maximum average degree. The \emph{average degree} $\ad$ of a graph $G=(V,E)$ is defined by $\ad(G)=\frac{2|E|}{|V|}$. The \emph{maximum average degree} $\mad(G)$ is the maximum, over all subgraphs $H$ of $G$, of $\ad(H)$. Another way to measure the sparsity is through the girth, i.e. the length of a shortest cycle. We denote $g(G)$ the girth of $G$. Intuitively, the higher the girth of a graph is, the sparser it gets. These two measures can actually be linked directly in the case of planar graphs.

A graph is \emph{planar} if one can draw its vertices with points on the plane, and edges with curves intersecting only at its endpoints. When $G$ is a planar graph, Wegner conjectured in 1977 that  $\chi^2(G)$ becomes linear in $\Delta(G)$:

\begin{conjecture}[Wegner \cite{wegner}]
\label{conj:Wegner}
Let $G$ be a planar graph with maximum degree $\Delta$. Then,
$$
\chi^2(G) \leq \left\{
    \begin{array}{ll}
        7, & \mbox{if } \Delta\leq 3, \\
        \Delta + 5, & \mbox{if } 4\leq \Delta\leq 7,\\
        \left\lfloor\frac{3\Delta}{2}\right\rfloor + 1, & \mbox{if } \Delta\geq 8.
    \end{array}
\right.
$$
\end{conjecture}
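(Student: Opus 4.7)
The final statement is Wegner's 1977 conjecture, which is one of the long-standing open problems in graph coloring; only partial results are known (exactly for $\Delta\le 3$, and asymptotically for large $\Delta$ by Havet, van den Heuvel, McDiarmid, and Reed). So what follows is the plan I would attempt, not a proof I expect to finish.

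The natural framework is the \emph{discharging method} applied to a hypothetical minimum counterexample $G$. The plan is: (i) identify a collection of \emph{reducible configurations}, that is, local subgraphs $H\subseteq G$ whose presence contradicts the minimality of $G$ (delete or contract a small piece of $H$, $2$-distance color the smaller graph by the induction hypothesis, and argue that the palette of size $\lfloor 3\Delta/2\rfloor+1$ leaves an available color to extend the coloring to the removed vertices); and (ii) show via discharging that no plane graph avoiding all these configurations can exist. For the extension step the key quantity is $|N^{\le 2}(v)|$, which must be bounded below the palette size after accounting for the colors already used within distance $2$ of the vertex being colored; typical targets are low-degree vertices clustered together, or low-degree vertices adjacent to vertices whose second neighborhood is sparse.

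For the discharging phase, I would give each vertex $v$ initial charge $d(v)-4$ and each face $f$ initial charge $\ell(f)-4$, so by Euler's formula the total charge is $-8$. The rules would move positive charge from high-degree vertices and long faces toward the problematic low-degree vertices / short faces, and the goal is to show every final charge is nonnegative, contradicting the total. The three regimes of the conjecture would be handled separately: for $\Delta\le 3$ one would rely on direct structural analysis (this case is already known); for $4\le\Delta\le 7$ the bound $\Delta+5$ is close enough to $\Delta+1$ that one may hope to reuse reducibility lemmas from the sparse-graph literature (including the ones developed later in this paper); and for $\Delta\ge 8$ one would need a more refined discharging that is sensitive to the distribution of neighbors of degree $\Delta$.

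The main obstacle, and the reason the conjecture has resisted since 1977, is that the bound is \emph{tight} in every regime: the $C_5\boxtimes K_2$-type graph for $\Delta=3$ and Wegner's own extremal constructions for $\Delta\ge 8$ (essentially obtained by blowing up a small planar graph so that many vertices lie pairwise within distance $2$) meet $\lfloor 3\Delta/2\rfloor+1$ exactly. Consequently any discharging scheme must be delicate enough to \emph{permit} these extremal configurations while ruling out everything else, which leaves essentially no slack in the charge accounting. A realistic forward-looking goal, and the one pursued in the present paper, is therefore not to settle Wegner's conjecture outright but to establish the much stronger bound $\chi^2(G)\le\Delta+2$ under additional sparsity hypotheses (bounded maximum average degree, or equivalently large girth in the planar case), where the extremal obstructions disappear and a cleaner discharging argument becomes feasible.
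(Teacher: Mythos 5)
You have correctly identified that this statement is Wegner's conjecture, which the paper states without proof because it remains open (only the case $\Delta\le 3$ and an asymptotic version for large $\Delta$ are known); declining to prove it and instead outlining the standard discharging framework and the tightness obstructions matches the paper's own treatment, which likewise offers no proof. There is nothing to compare beyond that, so no gap can be charged against you here.
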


The upper bound for the case where $\Delta\geq 8$ is tight (see \Cref{wegner figure}(i)). Recently, the case $\Delta\leq 3$ was proved by Thomassen \cite{tho18}, and by Hartke \textit{et al.} \cite{har16} independently. For $\Delta\geq 8$, Havet \textit{et al.} \cite{havet} proved that the bound is $\frac{3}{2}\Delta(1+o(1))$, where $o(1)$ is as $\Delta\rightarrow\infty$ (this bound holds for 2-distance list-colorings). \Cref{conj:Wegner} is known to be true for some subfamilies of planar graphs, for example $K_4$-minor free graphs \cite{lwz03}.

\begin{figure}[htbp]
\begin{subfigure}[b]{0.48\textwidth}
\centering
\begin{tikzpicture}[scale=0.4]
\begin{scope}[every node/.style={circle,thick,draw,minimum size=1pt,inner sep=2}]
    \node[fill] (y) at (0,0) {};
    \node[fill] (z) at (5,0) {};
    \node[fill] (x) at (2.5,4.33) {};

    \node[fill] (xy) at (1.25,2.165) {};
    \node[fill] (yz) at (2.5,0) {};
    \node[fill] (zx) at (3.75,2.165) {};

    \node[fill,label={[label distance=-1cm]above:$\lfloor\frac{\Delta}{2}\rfloor-1$ vertices}] (xy1) at (-2.5,4.33) {};
    \node[fill] (xy2) at (-1.5625,3.78875) {};
    \node[fill] (xy3) at (-0.625,3.2475) {};

    \node[fill,label={[label distance=-0.7cm]above:$\lceil\frac{\Delta}{2}\rceil$ vertices}] (zx1) at (7.5,4.33) {};
    \node[fill] (zx2) at (6.5625,3.78875) {};
    \node[fill] (zx3) at (5.625,3.2475) {};

    \node[fill,label={[label distance = -0.7cm]below:$\lfloor\frac{\Delta}{2}\rfloor$ vertices}] (yz1) at (2.5,-4.33) {};
    \node[fill] (yz2) at (2.5,-3.2475) {};
    \node[fill] (yz3) at (2.5,-2.165) {};

\end{scope}

\begin{scope}[every edge/.style={draw=black,thick}]
    \path (x) edge (y);
    \path (y) edge (z);
    \path (z) edge (x);

    \path (x) edge[bend left] (y);

    \path (x) edge (xy1);
    \path (x) edge (xy2);
    \path (x) edge (xy3);

    \path (y) edge (xy1);
    \path (y) edge (xy2);
    \path (y) edge (xy3);

    \path (x) edge (zx1);
    \path (x) edge (zx2);
    \path (x) edge (zx3);

    \path (z) edge (zx1);
    \path (z) edge (zx2);
    \path (z) edge (zx3);

    \path (y) edge (yz1);
    \path (y) edge (yz2);
    \path (y) edge (yz3);

    \path (z) edge (yz1);
    \path (z) edge (yz2);
    \path (z) edge (yz3);

    \path[dashed] (xy) edge (xy3);
    \path[dashed] (yz) edge (yz3);
    \path[dashed] (zx) edge (zx3);
\end{scope}
\draw[rotate=-30] (-0.625-1.4,3.2475-0.7) ellipse (3cm and 0.5cm);
\draw[rotate=30] (5+0.625+1,3.2475-3.25) ellipse (3cm and 0.5cm);
\draw[rotate=90] (-2,-2.5) ellipse (3cm and 0.5cm);
\end{tikzpicture}
\vspace{-0.9cm}
\caption{A graph with girth 3 and $\chi^2=\lfloor\frac{3\Delta}{2}\rfloor+1$.}
\end{subfigure}
\begin{subfigure}[b]{0.48\textwidth}
\centering
\begin{tikzpicture}[scale=0.4]
\begin{scope}[every node/.style={circle,thick,draw,minimum size=1pt,inner sep=2}]
    \node[fill] (y) at (0,0) {};
    \node[fill] (z) at (5,0) {};
    \node[fill] (x) at (2.5,4.33) {};

    \node[fill] (xy) at (1.25,2.165) {};
    \node[fill] (yz) at (2.5,0) {};
    \node[fill] (zx) at (3.75,2.165) {};

    \node[fill,label={[label distance=-1cm]above:$\lfloor\frac{\Delta}{2}\rfloor-1$ vertices}] (xy1) at (-2.5,4.33) {};
    \node[fill] (xy2) at (-1.5625,3.78875) {};
    \node[fill] (xy3) at (-0.625,3.2475) {};

    \node[fill,label={[label distance=-0.7cm]above:$\lceil\frac{\Delta}{2}\rceil$ vertices}] (zx1) at (7.5,4.33) {};
    \node[fill] (zx2) at (6.5625,3.78875) {};
    \node[fill] (zx3) at (5.625,3.2475) {};

    \node[fill,label={[label distance = -0.7cm]below:$\lfloor\frac{\Delta}{2}\rfloor$ vertices}] (yz1) at (2.5,-4.33) {};
    \node[fill] (yz2) at (2.5,-3.2475) {};
    \node[fill] (yz3) at (2.5,-2.165) {};

\end{scope}

\begin{scope}[every edge/.style={draw=black,thick}]
    \path (x) edge (y);
    \path (y) edge (z);
    \path (z) edge (x);

    \path (x) edge (xy1);
    \path (x) edge (xy2);
    \path (x) edge (xy3);

    \path (y) edge (xy1);
    \path (y) edge (xy2);
    \path (y) edge (xy3);

    \path (x) edge (zx1);
    \path (x) edge (zx2);
    \path (x) edge (zx3);

    \path (z) edge (zx1);
    \path (z) edge (zx2);
    \path (z) edge (zx3);

    \path (y) edge (yz1);
    \path (y) edge (yz2);
    \path (y) edge (yz3);

    \path (z) edge (yz1);
    \path (z) edge (yz2);
    \path (z) edge (yz3);

    \path[dashed] (xy) edge (xy3);
    \path[dashed] (yz) edge (yz3);
    \path[dashed] (zx) edge (zx3);
\end{scope}
\draw[rotate=-30] (-0.625-1.4,3.2475-0.7) ellipse (3cm and 0.5cm);
\draw[rotate=30] (5+0.625+1,3.2475-3.25) ellipse (3cm and 0.5cm);
\draw[rotate=90] (-2,-2.5) ellipse (3cm and 0.5cm);
\end{tikzpicture}
\vspace{-0.9cm}
\caption{A graph with girth 4 and $\chi^2=\lfloor\frac{3\Delta}{2}\rfloor-1$.}
\end{subfigure}
\caption{Graphs with $\chi^2\approx \frac32 \Delta$.}
\label{wegner figure}
\end{figure}
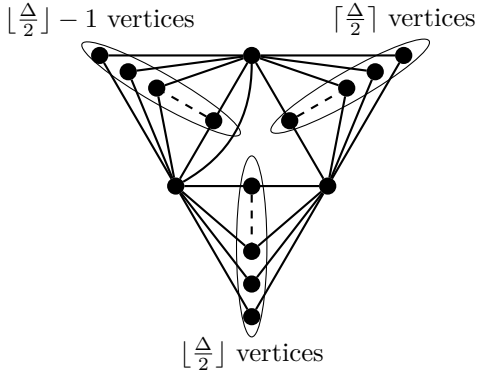
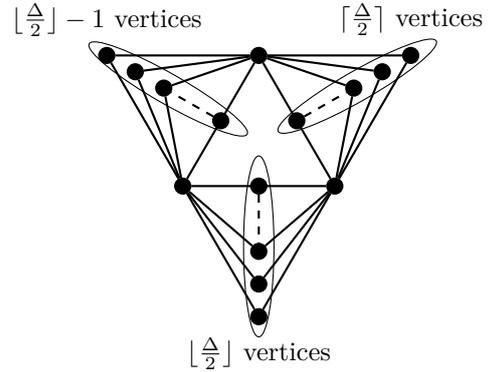

Wegner's conjecture motivated extensive researches on $2$-distance chromatic number of sparse graphs, either of planar graphs with high girth or of graphs with upper bounded maximum average degree which are directly linked due to \Cref{maximum average degree and girth proposition}.

\begin{proposition}[Folklore]\label{maximum average degree and girth proposition}
For every planar graph $G$, $(\mad(G)-2)(g(G)-2)<4$.
\end{proposition}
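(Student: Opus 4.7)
The plan is a direct computation from Euler's formula applied to subgraphs of $G$. Since $\mad(G)=\max_{H\subseteq G}\ad(H)$ and every subgraph $H\subseteq G$ is itself planar with $g(H)\geq g(G)$, it suffices to show that $(\ad(H)-2)(g(G)-2)<4$ for every subgraph $H$, and then to take the maximum over $H$.

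First I would dispose of the degenerate case where $H$ is acyclic: then $|E(H)|\leq |V(H)|-1$, so $\ad(H)<2$ and the product $(\ad(H)-2)(g(G)-2)$ is non-positive, hence strictly less than $4$. Otherwise $H$ contains at least one cycle, so $g(H)$ is a finite integer with $g(H)\geq g(G)\geq 3$. Fix a planar embedding of $H$ with $f$ faces. Euler's formula yields $|V(H)|-|E(H)|+f\geq 2$ (the slack $c-1$ from the number of components only helps us), and summing the lengths of face-boundary walks gives the classical bound $2|E(H)|\geq g(H)\cdot f$, since each edge contributes at most twice in total and each face-boundary walk has length at least $g(H)$. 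Eliminating $f$ between the two inequalities produces
\[
|E(H)|\leq \frac{g(H)\bigl(|V(H)|-2\bigr)}{g(H)-2},
\]
whence $\ad(H)=\frac{2|E(H)|}{|V(H)|}<\frac{2g(H)}{g(H)-2}$, which rearranges into $(\ad(H)-2)(g(H)-2)<4$. Because the function $x\mapsto \frac{2x}{x-2}$ is decreasing on $(2,+\infty)$ and $g(H)\geq g(G)$, this upgrades to $(\ad(H)-2)(g(G)-2)<4$. Taking the supremum over subgraphs $H\subseteq G$ then gives the claim.

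The only mildly delicate point, and the one I would expect to have to write out carefully, is the face-length inequality $2|E(H)|\geq g(H)\cdot f$ in the presence of bridges or several components, where face boundaries are closed walks rather than simple cycles. This can be handled either by observing that any such walk still has length at least $g(H)$ whenever $H$ contains a cycle, or, more cleanly, by first adjoining $c-1$ non-cycle-creating edges to reduce to the connected (or $2$-edge-connected) case before applying Euler's formula; everything else is a routine rearrangement.
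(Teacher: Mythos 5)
Your proof is correct. The paper states this proposition as folklore and supplies no proof of its own, but your argument --- passing to an arbitrary subgraph $H$, splitting off the acyclic case, and combining Euler's formula with the face-length bound $2|E(H)|\geq g(H)\cdot f$ (whose validity for disconnected graphs and graphs with bridges you rightly flag and correctly resolve) --- is precisely the standard proof of this fact, and every estimate checks out.
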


As a consequence, any theorem with an upper bound on $\mad(G)$ can be translated to a theorem with a lower bound on $g(G)$ under the condition that $G$ is planar. Many results have taken the following form: \textit{every graph $G$ of $\mad(G)< m_0$ and $\Delta(G)\geq \Delta_0$ satisfies $\chi^2(G)\leq \Delta(G)+c(m_0,\Delta_0)$ where $c(m_0,\Delta_0)$ is a constant depending only on $m_0$ and $\Delta_0$}. Due to \Cref{maximum average degree and girth proposition}, as a corollary, the same results on planar graphs of girth $g\geq g_0(m_0)$ where $g_0$ depends on $m_0$ follow. \Cref{recap table 2-distance} shows all known such results, up to our knowledge, on the $2$-distance chromatic number of planar graphs with fixed girth, either proven directly for planar graphs with high girth or came as a corollary of a result on graphs with bounded maximum average degree.

\begin{table}[H]
\begin{center}
\scalebox{0.7}{%
\begin{tabular}{||c||c|c|c|c|c|c|c|c||}
\hline
\backslashbox{$g_0$ \kern-1em}{\kern-1em $\chi^2(G)$} & $\Delta+1$ & $\Delta+2$ & $\Delta+3$ & $\Delta+4$ & $\Delta+5$ & $\Delta+6$ & $\Delta+7$ & $\Delta+8$\\
\hline \hline
$3$ & \slashbox{\phantom{\ \ \ \ \ }}{} & & &$\Delta=3$ \cite{tho18,har16}& & & & \\
\hline
$4$ & \slashbox{\phantom{\ \ \ \ \ }}{} & & & & & & & \\
\hline
$5$ & \slashbox{\phantom{\ \ \ \ \ }}{} &$\Delta\geq 10^7$ \cite{bon19}\footref{list footnote} &$\Delta\geq 339$ \cite{don17b} &$\Delta\geq 312$ \cite{don17} &$\Delta\geq 15$ \cite{bu18b}\tablefootnote{\label{other footnote}Corollaries of more general colorings of planar graphs.} &$\Delta\geq 12$ \cite{bu16}\footref{list footnote} & $\Delta\neq 7,8$ \cite{don17} &all $\Delta$ \cite{dl16}\\
\hline
$6$ & \slashbox{\phantom{\ \ \ \ \ }}{} &$\Delta\geq 17$ \cite{bon14}\footref{mad footnote} &$\Delta\geq 9$ \cite{bu16}\footref{list footnote} & &all $\Delta$ \cite{bu11} & & & \\
\hline
$7$ & $\Delta\geq 16$ \cite{iva11}\tablefootnote{\label{list footnote}Corollaries of 2-distance list-colorings of planar graphs.} &\cellcolor{gray!50} $\Delta \geq 10$\tablefootnote{\label{ours} Our results.} & $\Delta\geq 6$ \cite{la21}\footref{mad list footnote} &$\Delta=4$ \cite{cra13}\tablefootnote{\label{mad list footnote}Corollaries of 2-distance list-colorings of graphs with a bounded maximum average degree.} & & & & \\
\hline
$8$ & $\Delta\geq 9$ \cite{lmpv19}\footref{other footnote}& \cellcolor{gray!50}$\Delta \geq 6$\footref{ours} & $\Delta\geq 4$ \cite{la21}\footref{mad list footnote} & & & & & \\
\hline
$9$ &$\Delta\geq 7$ \cite{lm21}\footref{mad footnote} &$\Delta=5$ \cite{bu15}\footref{mad list footnote} &$\Delta=3$ \cite{cra07}\footref{list footnote} & & & & & \\
\hline
$10$ & $\Delta\geq 6$ \cite{iva11}\footref{list footnote} & & & & & & & \\
\hline
 $11$ & &$\Delta=4$ \cite{cra13}\footref{mad list footnote} & & & & & & \\
\hline
$12$ & $\Delta=5$ \cite{iva11}\footref{list footnote} &$\Delta=3$ \cite{bi12}\footref{list footnote} & & & & & &\\
\hline
$13$ & & & & & & & &\\
\hline
$14$ &$\Delta\geq 4$ \cite{bon13}\tablefootnote{\label{mad footnote}Corollaries of 2-distance colorings of graphs with a bounded maximum average degree.} & & & & & & &\\
\hline
$\dots$ & & & & & & & & \\
\hline
$21$ & $\Delta=3$\cite{lm21g21d3} & & & & & & & \\
\hline
\end{tabular}}
\caption{The latest results with a coefficient 1 before $\Delta$ in the upper bound of $\chi^2$.}
\label{recap table 2-distance}
\end{center}
\end{table} 

For example, the result from line ``7'' and column ``$\Delta + 1$'' from \Cref{recap table 2-distance} reads as follows : ``\emph{every planar graph $G$ of girth at least 7  and of $\Delta$ at least 16 satisfies $\chi^2(G)\leq \Delta+1$}''. The crossed out cases in the first column correspond to the fact that, for $g_0\leq 6$, there are planar graphs $G$ with $\chi^2(G)=\Delta+2$ for arbitrarily large $\Delta$ \cite{bor04,dvo08b}. The lack of results for $g = 4$ is due to the fact that the graph in \Cref{wegner figure}(ii) has girth 4, and $\chi^2=\lfloor\frac{3\Delta}{2}\rfloor-1$ for all $\Delta$.

We are interested in the case $\chi^2(G)\leq \Delta+2$. In particular, we were looking for the smallest integer $\Delta_0$ such that every graph with maximum degree $\Delta\geq \Delta_0$ and $\mad<\frac{8}{3}$ (resp. $\mad< \frac{14}{5}$) can be $2$-distance colored with $\Delta+2$ colors. That family contains planar graphs with $\Delta\geq\Delta_0$ and girth at least $8$ (resp. $7$).

Our main results are the following:
\begin{theorem} \label{main theorem1}
If $G$ is a graph with $\mad(G)\leq \frac83$, then $G$ is $2$-distance $(\Delta(G)+2)$-colorable for $\Delta(G)\geq 6$.
\end{theorem}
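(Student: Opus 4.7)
The plan is to argue by contradiction. Let $H$ be a vertex-minimum counterexample: a graph with $\mad(H) \leq 8/3$, $\Delta(H) \geq 6$, and no $2$-distance $(\Delta(H)+2)$-coloring, with $|V(H)|$ as small as possible. Since both the hypothesis $\mad \leq 8/3$ and the non-colorability behave well under taking subgraphs, $H$ is connected, and one verifies quickly that $\delta(H) \geq 2$: a degree-$1$ vertex $u$ with neighbor $v$ would be trivially extendable from a $2$-distance $(\Delta+2)$-coloring of $H - u$, because at most $d(v) \leq \Delta$ colors are forbidden at $u$, leaving at least two free from the $\Delta+2$ palette.

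Next I would establish a battery of local ``reducible'' configurations in $H$. The prototype is the following counting argument: for any vertex $u$ of degree $d$, extending a $2$-distance $(\Delta+2)$-coloring of $H-u$ forbids at most $\sum_{v \in N(u)} d(v)$ colors at $u$, so $u$ is reducible whenever this sum is at most $\Delta+1$. With $\Delta \geq 6$, this already forces every $2$-vertex $u$ with neighbors $v_1,v_2$ to satisfy $d(v_1)+d(v_2) \geq \Delta+2 \geq 8$. Further refinements I would prove by the same template applied to larger but still small deletion sets (a whole $2$-thread, a degree-$3$ vertex together with its $2$-neighbors, a pair of adjacent low-degree vertices, etc.), sometimes using a greedy re-insertion order and sometimes a short color exchange: bounds on the number of $2$-neighbors of any low-degree vertex, an upper bound on the length of a $2$-thread as a function of its endpoint degrees, and forbidden clusters such as two $2$-vertices sharing a common neighbor of moderate degree. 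The precise list is tuned to what the subsequent discharging requires.

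Finally I would run a discharging argument with initial charge $\mu(v) = d(v) - 8/3$; the hypothesis $\mad(H) \leq 8/3$ gives $\sum_v \mu(v) \leq 0$. I would then design rules that move charge from vertices of degree at least $4$ (each carrying surplus at least $4/3$) towards their $2$- and $3$-neighbors, for instance sending $\tfrac{1}{3}$ from every $\geq 4$-vertex to each adjacent $2$-vertex, with possibly smaller amounts at distance $2$ or to adjacent $3$-vertices. Using the reducible configurations, one then checks case-by-case on the vertex degree that every vertex ends with final charge $\mu^*(v) \geq 0$, and that at least one vertex has $\mu^*(v) > 0$ (or the summed improvement is strictly positive), contradicting charge conservation.

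The main obstacle I expect is the middle step, particularly the tight case $\Delta = 6$: the inequality $d(v_1)+d(v_2) \geq 8$ around a $2$-vertex permits both neighbors to have degree exactly $4$, and a $4$-vertex has surplus only $4/3$, just enough to feed four $2$-neighbors and nothing more. Preventing bad aggregations of such structures---for instance a $4$-vertex all of whose neighbors are $2$-vertices whose opposite endpoints are again $4$-vertices---will almost certainly require cluster-type reducible configurations, proven by deleting an entire small substructure and re-inserting its vertices greedily in a carefully chosen order so that each receives a short enough forbidden list; in my experience these are the most delicate arguments in proofs of this shape, and getting them tight against the $\mad < 8/3$ threshold is exactly where the $\Delta \geq 6$ assumption becomes necessary.
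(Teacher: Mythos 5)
Your overall framework---minimal counterexample, greedy reducibility of small deleted configurations, discharging from the charge $d(v)-\frac{8}{3}$---is exactly the one the paper uses, and your preliminary steps (connectivity, $\delta\geq 2$, and the bound $d(v_1)+d(v_2)\geq\Delta+2$ around a $2$-vertex) are correct. The gap is in the middle and final steps: everything you commit to is \emph{local}, and local rules provably cannot close the tight cases of this theorem. Moreover the case you flag as the expected obstacle is not one: a $4$-vertex has surplus $\frac{4}{3}$ and each $2$-neighbour lying on a $1$-path needs only $\frac{1}{3}$ from each endpoint, so a $4$-vertex surrounded by such $2$-vertices ends at exactly $0$ with no reducibility argument required. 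The genuinely tight configurations are (a) $2$-vertices adjacent to other $2$-vertices and (b) $3$-vertices whose three neighbours are all $2$-vertices, and both force the argument to become non-local.

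For (a), the paper first strengthens your inequality considerably: it shows $3$-threads do not exist and that both endvertices of any $2$-thread are distinct $\Delta$-vertices. A $2$-thread carries a total deficit of $\frac{4}{3}$, while a $\Delta$-vertex with $\Delta=6$ has surplus only $\frac{10}{3}$; if it is incident to six $2$-threads, any symmetric rule charging it $\frac{2}{3}$ per thread is overdrawn ($4>\frac{10}{3}$). The paper escapes by a global device: it proves that a cycle made up of $2$-threads is reducible (an \emph{unbounded} configuration, handled via $2$-choosability of even cycles, not by deleting a bounded substructure), so the union of all $2$-threads is a forest; rooting each tree, it assigns every $2$-thread a unique ``sponsor'' endvertex which pays $\frac{5}{6}$ while the other pays $\frac{1}{2}$, and each $\Delta$-vertex sponsors at most one thread, giving exactly $\frac{5}{6}+5\cdot\frac{1}{2}=\frac{10}{3}$ in the worst case. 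Case (b) needs the same forest-plus-sponsor mechanism a second time, for the paths joining $(1,1,1)$-vertices to the $\Delta$-vertices two steps away, and finally one delicate reducible configuration (a $\Delta$-vertex simultaneously sponsoring a $2$-thread and a $(1,1,1)$-vertex while all its other incident paths end at $3^-$-vertices) is needed to rescue the doubly-burdened $\Delta$-vertices. None of these ingredients---the acyclicity lemmas, the orientation/sponsor assignment, or the closed-neighbourhood reducible configuration---appears in your plan, and they are where the substance of the proof lies; as written, your discharging would fail on a $\Delta$-vertex all of whose incident paths are $2$-threads.
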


\begin{theorem} \label{main theorem2}
If $G$ is a graph with $\mad(G)\leq \frac{14}5$, then $G$ is $2$-distance $(\Delta(G)+2)$-colorable for $\Delta(G)\geq 10$.
\end{theorem}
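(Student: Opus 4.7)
The plan is to argue by minimum counterexample and discharging. Let $G$ be a counterexample to \Cref{main theorem2} that minimizes $|V(G)|+|E(G)|$: we have $\mad(G)\leq\frac{14}{5}$ and $\Delta(G)\geq 10$, but $G$ admits no 2-distance $(\Delta(G)+2)$-coloring, while every proper subgraph with the same maximum-degree bound does.

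The first step is to build a library of \emph{reducible configurations}, i.e.\ local structures that $G$ cannot contain. For each such structure we delete (or temporarily uncolor) a small set $S$ of vertices, invoke minimality to 2-distance color the remainder with $\Delta(G)+2$ colors, and extend back to $S$. The basic extension bound is that a vertex $v$ awaiting a color has at most $\sum_{u\sim v}d(u)$ forbidden colors (itself and its second-neighborhood), so it suffices that this quantity remain at most $\Delta+1$ at each extension step. Typical configurations to rule out will be pendant vertices, short chains of 2-vertices, a 2-vertex whose two neighbors are both of low degree, and 3-vertices carrying several 2-neighbors. In the tighter cases one temporarily uncolors a neighbor in $G-S$ and recolors it last, gaining one free color at $v$.

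The second step is a discharging argument. Assign initial charge $\mu(v)=d(v)-\frac{14}{5}$; the hypothesis on $\mad$ gives $\sum_v\mu(v)\leq 0$. Vertices of degree 1 and 2 carry the only negative charges ($-\frac{9}{5}$ and $-\frac{4}{5}$ respectively), while a degree-$d$ vertex with $d\geq 3$ contributes at least $\frac{1}{5}$. We design rules under which high-degree vertices donate fixed fractions of their surplus to nearby low-degree vertices, with possibly larger shares flowing along 2-vertex chains toward their high-degree endpoints. Using the absence of reducible configurations, we verify vertex-by-vertex that every $v$ ends with non-negative charge, and that some vertex ends strictly positive, contradicting the initial inequality.

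The main obstacle --- and where the $\Delta\geq 10$ threshold really enters --- is the calibration between reducibility and discharging under the comparatively generous density bound $\frac{14}{5}$. A single 2-vertex already needs to absorb $\frac{4}{5}$ of charge, and this may have to come from a poor neighborhood: if both its neighbors are of degree only 3, neither can spare much on its own, so reducibility must forbid most such local patterns outright. A larger $\Delta$ buys extra color-slack in each extension argument and lets us declare more configurations reducible; the delicate part will be to use just enough of this slack that the structures which survive in $G$ are still rich enough to redistribute the charge.
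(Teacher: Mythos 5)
Your outline follows the same general scheme as the paper (minimal counterexample, reducible configurations, discharging with charge $5d(v)-14$ up to scaling), but as written it contains a genuine gap: a purely local discharging rule of the kind you describe cannot be made to close at $\Delta=10$. The binding constraint is a $2$-path $uu_1u_2v$: each internal $2$-vertex carries charge $-4$, so the path needs $8$ in total from its two $\Delta$-endvertices. If you split this evenly, a $\Delta$-vertex incident to $\Delta$ such paths pays $4\Delta$ against a budget of $5\Delta-14$, which is negative for all $\Delta<14$; so ``larger shares flowing along 2-vertex chains toward their high-degree endpoints'' must be asymmetric, and there is no \emph{local} criterion telling you which endvertex of a given $2$-path should pay more. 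The paper resolves this with a global device you do not anticipate: it first proves that the union of all $2$-paths contains no cycle (\Cref{tree lemma2}), so these paths induce a forest; it then roots each tree and declares the endvertex farther from the root to be the path's unique \emph{sponsor}, which pays $4+\tfrac12=\tfrac92$ while the other endvertex pays only $\tfrac72$. Since each vertex sponsors at most one $2$-path, the worst case becomes $5\Delta-14-\tfrac92-\tfrac72(\Delta-1)=\tfrac32\Delta-15\ge 0$ exactly when $\Delta\ge 10$ --- this, not extra color-slack in the extension arguments, is where the threshold $\Delta\ge 10$ truly enters. An entirely analogous forest-plus-sponsor construction is needed for the $(1,1,1)$-paths, together with a further lemma (\Cref{weird cases lemma2}) handling a $\Delta$-vertex that is simultaneously a sponsor of both kinds.

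Beyond this, your step one is only a promissory note: the proof requires a specific and somewhat delicate list of reducible configurations (no $3^+$-paths; every $2$-path has two distinct $\Delta$-endvertices; the structure around $(1,1,1)$-, $(1,1,0)$-, $(1,0,0)$- and $(1,1,1,1)$-vertices; and the acyclicity statements above), several of which are global rather than the ``pendant vertices and short chains'' you list. Without naming these configurations and the matching rules, the vertex-by-vertex verification cannot be carried out, so the proposal is a plausible strategy rather than a proof; and its central quantitative difficulty is precisely the one the sponsor mechanism exists to overcome.
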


For planar graphs, we obtain the following corollaries:
\begin{corollary} 
If $G$ is a graph with $g(G)\geq 8$, then $G$ is $2$-distance $(\Delta(G)+2)$-colorable for $\Delta(G)\geq 6$.
\end{corollary}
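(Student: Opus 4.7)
The plan is simply to combine \Cref{main theorem1} with \Cref{maximum average degree and girth proposition}, since the corollary sits under the heading ``For planar graphs'' and $G$ is therefore planar with girth at least~$8$. Essentially all the mathematical content has already been absorbed into the main theorem; only a translation between girth and maximum average degree remains.

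First, I would apply \Cref{maximum average degree and girth proposition} to $G$. Since $g(G)\geq 8$, the inequality $(\mad(G)-2)(g(G)-2)<4$ gives
\[
\mad(G) < 2 + \frac{4}{g(G)-2} \leq 2 + \frac{4}{6} = \frac{8}{3}.
\]
Then I would invoke \Cref{main theorem1}, whose hypotheses $\mad(G)\leq \tfrac{8}{3}$ and $\Delta(G)\geq 6$ are both now satisfied, to conclude that $G$ admits a $2$-distance $(\Delta(G)+2)$-coloring.

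There is no real obstacle here: the hard work is entirely contained in \Cref{main theorem1}, and the corollary is a one-line consequence. The only mildly delicate point is checking that the bound $\mad(G)<\tfrac{8}{3}$ (strict) is enough to enter the hypothesis $\mad(G)\leq \tfrac{8}{3}$ of \Cref{main theorem1}, which it trivially is. The specific girth threshold $8$ is chosen precisely so that $4/(g-2) = 2/3$, landing exactly at the $\tfrac{8}{3}$ threshold handled by the main theorem for $\Delta\geq 6$.
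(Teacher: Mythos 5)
Your proposal is correct and is exactly the intended derivation: the paper obtains this corollary by combining \Cref{maximum average degree and girth proposition} (which for a planar graph with $g(G)\geq 8$ yields $\mad(G)<2+\frac{4}{6}=\frac{8}{3}$) with \Cref{main theorem1}. Your computation and the observation that the strict bound $\mad(G)<\frac83$ fits within the hypothesis $\mad(G)\leq\frac83$ are both accurate.
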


\begin{corollary}
If $G$ is a graph with $g(G)\geq 7$, then $G$ is $2$-distance $(\Delta(G)+2)$-colorable for $\Delta(G)\geq 10$.
\end{corollary}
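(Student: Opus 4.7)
The plan is to derive this corollary directly from \Cref{main theorem2} by using \Cref{maximum average degree and girth proposition} as the bridge between the girth hypothesis and a maximum-average-degree hypothesis. Since the corollary sits in the subsection ``For planar graphs'', I read the hypothesis as ``$G$ is a planar graph with $g(G)\geq 7$''; planarity is essential because the folklore inequality $(\mad(G)-2)(g(G)-2)<4$ only holds in that setting (it comes from Euler's formula, with girth controlling the ratio of edges to faces).

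First I would invoke \Cref{maximum average degree and girth proposition} on $G$. With $g(G)\geq 7$ we have $g(G)-2\geq 5$, so
\[
(\mad(G)-2)\cdot 5 \;\leq\; (\mad(G)-2)(g(G)-2) \;<\; 4,
\]
which rearranges to $\mad(G) < 2 + \tfrac{4}{5} = \tfrac{14}{5}$. In particular $\mad(G)\leq \tfrac{14}{5}$ holds (in fact strictly), so the maximum average degree hypothesis of \Cref{main theorem2} is satisfied.

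Next I would check that the remaining hypothesis of \Cref{main theorem2} is also present: we are told $\Delta(G)\geq 10$, which is exactly the degree condition required by that theorem. Applying \Cref{main theorem2} to $G$ then yields a $2$-distance $(\Delta(G)+2)$-coloring, completing the proof.

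There is no real obstacle here — the whole content of the corollary has already been packaged into \Cref{main theorem2}, and \Cref{maximum average degree and girth proposition} is precisely the translation tool used throughout the paper (and justified in the discussion preceding the table of known results). The only thing to be mindful of is the strict-versus-non-strict inequality in the folklore proposition, but since \Cref{main theorem2} allows $\mad(G)\leq \tfrac{14}{5}$ and we obtain the strict bound $\mad(G)<\tfrac{14}{5}$, this gap is harmless.
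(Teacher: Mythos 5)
Your proof is correct and is exactly the route the paper intends: planarity (implicit in the "For planar graphs" framing) plus \Cref{maximum average degree and girth proposition} with $g(G)\geq 7$ gives $\mad(G)<\tfrac{14}{5}$, and \Cref{main theorem2} does the rest. The only cosmetic quibble is that the intermediate inequality $(\mad(G)-2)\cdot 5\leq(\mad(G)-2)(g(G)-2)$ presumes $\mad(G)\geq 2$; when $\mad(G)<2$ the conclusion $\mad(G)<\tfrac{14}{5}$ holds trivially anyway, so nothing is lost.
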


We will prove \Cref{main theorem1,main theorem2} respectively in \Cref{sec2,sec3} using the same scheme.

\section{Proof of \Cref{main theorem1}}
\label{sec2}

\paragraph{Notations and drawing conventions.} For $v\in V(G)$, the \emph{2-distance neighborhood} of $v$, denoted $N^*_G(v)$, is the set of 2-distance neighbors of $v$, which are vertices at distance at most two from $v$, not including $v$. We also denote $d^*_G(v)=|N^*_G(v)|$. We will drop the subscript and the argument when it is clear from the context. Also for conciseness, from now on, when we say ``to color'' a vertex, it means to color such vertex differently from all of its colored neighbors at distance at most two. Similarly, any considered coloring will be a 2-distance coloring. We say that a vertex $u$ ``sees'' a vertex $v$ if $v\in N^*_G(u)$. We also say that $u$ ``sees a color'' $c$ if there exists $v\in N^*_G(u)$ such that $v$ is colored $c$.

Some more notations:
\begin{itemize}
\item A \emph{$d$-vertex} ($d^+$-vertex, $d^-$-vertex) is a vertex of degree $d$ (at least $d$, at most $d$). A \emph{$(\db{d}{e})$-vertex} is a vertex of degree between $d$ and $e$ included. 
\item A \emph{$k$-path} ($k^+$-path, $k^-$-path) is a path of length $k+1$ (at least $k+1$, at most $k+1$) where the $k$ internal vertices are 2-vertices. The endvertices of a $k$-path are $3^+$-vertices.
\item A \emph{$(k_1,k_2,\dots,k_d)$-vertex} is a $d$-vertex incident to $d$ different paths, where the $i^{\rm th}$ path is a $k_i$-path for all $1\leq i\leq d$.
\end{itemize}
As a drawing convention for the rest of the figures, black vertices will have a fixed degree, which is represented, and white vertices may have a higher degree than what is drawn. Also, we will represented the lower bound on the number of available colors next to each not yet colored vertex in a subgraph $H$ of $G$ when $G-H$ is colored.

Let $G_1$ be a counterexample to \Cref{main theorem1} with the fewest number of vertices. Graph $G_1$ has maximum degree $\Delta\geq 6$ and $\mad(G)<\frac83$. The purpose of the proof is to prove that $G_1$ cannot exist. In the following we will study the structural properties of $G_1$. We will then apply a discharging procedure. 

\subsection{Structural properties of $G_1$}

\begin{lemma}\label{connected1}
Graph $G_1$ is connected.
\end{lemma}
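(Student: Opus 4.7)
My plan is to argue by contradiction, exploiting the minimality of $G_1$: if $G_1$ has at least two connected components, I will extract from it a strictly smaller counterexample to \Cref{main theorem1}, contradicting the choice of $G_1$.

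The key observation is that a $2$-distance coloring respects connected components, because two vertices in distinct components are at infinite distance and therefore impose no mutual colour constraint. Hence a $2$-distance $(\Delta(G_1)+2)$-coloring of $G_1$ is the same data as one such coloring on each connected component, all sharing a common palette of $\Delta(G_1)+2$ colours. In particular, if $G_1$ admits no such coloring, then some component $C$ of $G_1$ must fail to admit one. I fix such a $C$. Because $G_1$ is disconnected, $|V(C)|<|V(G_1)|$; because $C$ is a subgraph of $G_1$, $\mad(C)\le\mad(G_1)\le 8/3$ and $\Delta(C)\le\Delta(G_1)$; and because any $2$-distance $(\Delta(C)+2)$-coloring of $C$ is \emph{a fortiori} a $(\Delta(G_1)+2)$-coloring, $C$ is also not $2$-distance $(\Delta(C)+2)$-colorable.

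When $\Delta(C)\ge 6$, the component $C$ meets every hypothesis of \Cref{main theorem1} yet violates its conclusion, so $C$ is itself a counterexample with strictly fewer vertices than $G_1$, contradicting the minimality of $G_1$. The only delicate point, and the main obstacle in making the argument airtight, is the residual case $\Delta(C)\le 5$, in which $C$ falls outside the scope of \Cref{main theorem1} and so the minimality of $G_1$ cannot be invoked directly. I expect this to be handled either implicitly by adopting the convention that the minimum counterexample is chosen to be connected from the outset (a standard convention in discharging arguments of this kind, without loss of generality by the component-wise observation above), or by a short direct argument exploiting the $2$-degeneracy forced by $\mad(C)<3$ together with the ample palette of size $\Delta(G_1)+2\ge 8$ relative to the small maximum degree of such a sparse component. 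Either way, the lemma reduces to a one-paragraph minimality extraction, requiring no structural or discharging content of its own.
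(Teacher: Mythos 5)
Your proposal takes essentially the same route as the paper, whose entire proof is the one-liner ``otherwise a component of $G_1$ would be a smaller counterexample.'' The residual case $\Delta(C)\le 5$ that you flag is a genuine subtlety the paper glosses over; it is resolved in the standard way you anticipate, namely by reading the minimality as applying to the statement ``every graph with $\mad\le\frac83$ and maximum degree at most $\Delta(G_1)$ on fewer vertices is $2$-distance $(\Delta(G_1)+2)$-colorable,'' which covers every component (and, later, every proper subgraph) regardless of its own maximum degree.
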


\begin{proof}
Otherwise a component of $G_1$ would be a smaller counterexample.
\end{proof}

\begin{lemma}\label{minimumDegree1}
The minimum degree of $G_1$ is at least $2$.
\end{lemma}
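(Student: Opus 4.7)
The plan is to argue by contradiction via the minimality of $G_1$. Suppose $G_1$ contains a vertex $v$ with $d(v)\leq 1$. The degree-$0$ case is immediate from \Cref{connected1}: an isolated vertex would force $G_1$ to be the one-vertex graph, which is trivially $2$-distance $(\Delta+2)$-colorable. So I may assume $d(v)=1$, and let $u$ be the unique neighbor of $v$.

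I would then consider $G' = G_1 - v$. Since removing a vertex can only remove subgraphs, $\mad(G')\leq \mad(G_1)\leq \frac{8}{3}$, and clearly $|V(G')|<|V(G_1)|$ and $\Delta(G')\leq \Delta(G_1)$. By minimality of $G_1$, $G'$ admits a $2$-distance $(\Delta(G_1)+2)$-coloring $\phi$ (one regards the theorem as being proved by induction on the number of vertices with $\Delta$ treated as a parameter so that the inductive hypothesis supplies at least $\Delta(G_1)+2$ colors even if the maximum degree happens to drop in $G'$).

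To reach a contradiction, I would extend $\phi$ to $v$. The $2$-distance neighborhood of $v$ in $G_1$ is
\[
N^{*}_{G_1}(v) = \{u\}\cup\bigl(N_{G_1}(u)\setminus\{v\}\bigr),
\]
so $d^{*}_{G_1}(v) = d_{G_1}(u) \leq \Delta(G_1)$. Thus $v$ sees at most $\Delta(G_1)$ colors under $\phi$, and at least $(\Delta(G_1)+2)-\Delta(G_1)=2$ colors remain available. Assigning any such color to $v$ yields a $2$-distance $(\Delta(G_1)+2)$-coloring of $G_1$, contradicting that $G_1$ is a counterexample.

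The only real subtlety is justifying that $G'$ still falls under the inductive hypothesis when $\Delta(G')<6$, which can occur only if $u$ is the unique $\Delta$-vertex of $G_1$ and $\Delta(G_1)=6$; this is handled by the standard convention of proving the statement by induction on $|V(G)|$ for a fixed palette size $\Delta+2$, since any $2$-distance coloring of $G'$ using at most $\Delta(G_1)+2$ colors serves our purpose. Once that point is cleared, the argument reduces to the trivial counting bound $d^{*}(v)\leq \Delta$, which leaves the required slack of $2$ colors.
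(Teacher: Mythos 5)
Your proof is correct and follows the same approach as the paper: delete the degree-$1$ vertex $v$, color $G_1-v$ by minimality, and recolor $v$ greedily since it sees at most $\Delta$ colored vertices while $\Delta+2$ colors are available. Your explicit handling of the case $d(v)=0$ via connectedness and your remark about the palette size when $\Delta$ might drop are just more careful versions of what the paper leaves implicit.
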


\begin{proof}
By \Cref{connected1}, the minimum degree is at least 1. If $G_1$ contains a degree 1 vertex $v$, then we can simply remove $v$ and 2-distance color the resulting graph, which is possible by minimality of $G_1$. Then, we add $v$ back and color it (at most $\Delta$ constraints and $\Delta+2$ colors).
\end{proof}

\begin{lemma} \label{3-path lemma1}
Graph $G_1$ has no $3^+$-paths.
\end{lemma}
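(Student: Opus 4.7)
I would prove this by a standard minimum-counterexample reduction. Suppose for contradiction that $G_1$ contains a $3^+$-path, and let $x_1,x_2,x_3$ be three consecutive internal $2$-vertices on this path; denote by $u$ the other neighbor of $x_1$ and by $y$ the other neighbor of $x_3$ (possibly $u=y$, and possibly $u$ or $y$ are themselves $2$-vertices if the path is longer than length four). Consider $G' = G_1 - \{x_1,x_2,x_3\}$. Since $\mad$ is monotone under taking subgraphs, $\mad(G') \leq \mad(G_1) \leq \tfrac{8}{3}$, and $\Delta(G') \leq \Delta(G_1) = \Delta$; by the minimality of $G_1$, the graph $G'$ therefore admits a $2$-distance $(\Delta+2)$-coloring.

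Next, I would extend this coloring to $G_1$ by coloring the removed vertices in the order $x_1$, then $x_3$, then $x_2$. When coloring $x_1$, its $2$-distance neighborhood in $G_1$ is contained in $\{u\} \cup (N_{G_1}(u)\setminus\{x_1\}) \cup \{x_2,x_3\}$, and among these only $u$ and $N_{G_1}(u)\setminus\{x_1\}$ are already colored, so at most $d(u) \leq \Delta$ colors are forbidden, leaving at least two of the $\Delta+2$ colors. Symmetrically, when coloring $x_3$, at most $d(y) \leq \Delta$ colors are forbidden from the $y$-side, plus the freshly chosen color of $x_1$ (which lies at distance $2$ from $x_3$ through $x_2$), for at most $\Delta+1$ forbidden colors and at least one choice. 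Finally, $x_2$ sees only $\{x_1,x_3,u,y\}$ at distance at most $2$, hence at most $4$ forbidden colors and at least $\Delta-2 \geq 4$ choices.

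The main obstacle I anticipate is the tightness of the count at $x_3$: only a single available color is guaranteed, which forces the coloring order $x_1, x_3, x_2$ rather than, say, $x_2$ first (which would inflate $x_3$'s forbidden set to potentially $\Delta+2$). The degenerate possibilities — $u=y$ (a $4$-cycle through $x_1 x_2 x_3$), or $u,y$ being $2$-vertices on a longer path — only merge forbidden colors or lower $d(u),d(y)$, so they strengthen rather than weaken the above counts. The resulting $2$-distance $(\Delta+2)$-coloring of $G_1$ contradicts the choice of $G_1$ as a counterexample, so no $3^+$-path can exist.
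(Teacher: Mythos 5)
Your proof is correct and takes essentially the same approach as the paper: delete three consecutive internal $2$-vertices of the $3^+$-path, color the rest by minimality, and extend in the order first--third--middle with the same availability counts (at least $2$, at least $1$ once the first is placed, and at least $\Delta-2\geq 4$ for the middle vertex). Your explicit handling of the degenerate cases ($u=y$ or $u,y$ of degree $2$) is a minor addition the paper leaves implicit; there is no gap.
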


\begin{proof}
Suppose $G_1$ contains a $3^+$-path $v_0v_1v_2v_3\dots v_k$ with $k\geq 4$. We color $H=G_1-\{v_1,v_2,v_3\}$ by minimality of $G_1$, then we finish by coloring $v_1$, $v_3$, and $v_2$ in this order, which is possible since they have at least respectively 2, 2, and $\Delta\geq 6$ available colors left after the coloring of $H$.
\end{proof}

\begin{lemma}\label{2-path lemma1}
A $2$-path has two distinct endvertices and both have degree $\Delta$.
\end{lemma}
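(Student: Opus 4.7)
The plan is a minimality argument. Suppose for contradiction that $G_1$ contains a $2$-path $P = uv_1v_2w$ (so $v_1, v_2$ are $2$-vertices and $u, w$ are $3^+$-vertices) violating the conclusion: either $u = w$, or $u \neq w$ and, without loss of generality, $d(u) \leq \Delta - 1$. In either case, I would delete $v_1$ and $v_2$ and $2$-distance $(\Delta+2)$-color the rest using the minimality of $G_1$; the task then reduces to extending the coloring to $v_1$ and $v_2$.

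If $u = w$, the three vertices span a triangle, and a direct computation gives $N^*(v_1) = \{u, v_2\} \cup (N(u) \setminus \{v_1, v_2\})$ (and symmetrically for $v_2$), so $|N^*(v_1)| = |N^*(v_2)| = d(u) \leq \Delta$. I color $v_1$ first: it has at most $d(u) - 1$ colored $2$-distance neighbors (since $v_2$ is still uncolored), so at least $3$ colors remain. Then $v_2$ sees at most $d(u) \leq \Delta$ colored vertices and still has at least $2$ choices.

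If $u \neq w$ with $d(u) \leq \Delta - 1$, I would color $v_2$ before $v_1$. Vertex $v_2$'s colored $2$-distance neighborhood is contained in $\{u, w\} \cup (N(w) \setminus \{v_2\})$, of size at most $d(w) + 1 \leq \Delta + 1$, so at least one color is free. After coloring $v_2$, vertex $v_1$'s colored $2$-distance neighborhood lies inside $\{u, v_2, w\} \cup (N(u) \setminus \{v_1\})$, of size at most $d(u) + 2 \leq \Delta + 1$, again leaving at least one available color.

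The main care point is the bookkeeping on the $2$-distance neighborhood sizes: every bound above is a generous upper bound that treats the listed vertices as distinct, so any incidental coincidence (such as $w \in N(u)$, or edges between $N(u)$ and $N(w)$) can only tighten the inequality. In both cases the extension succeeds, producing a $2$-distance $(\Delta+2)$-coloring of $G_1$, which contradicts its status as a counterexample. Hence the endvertices of every $2$-path are distinct, and each has degree exactly $\Delta$.
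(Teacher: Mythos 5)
Your proposal is correct and follows essentially the same route as the paper: delete the two internal $2$-vertices, color the rest by minimality, and extend greedily, with the same case split ($u=w$ versus $u\neq w$ with a $(\Delta-1)^-$-endvertex) and the same counting of available colors (the order in which you color $v_1$ and $v_2$ is just the mirror image of the paper's choice). No gaps.
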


\begin{proof}
Suppose that $G_1$ contains a 2-path $v_0v_1v_2v_3$. 

If $v_0=v_3$, then we color $G_1-\{v_1,v_2\}$ by minimality of $G_1$ and extend the coloring to $G_1$ by coloring greedily $v_1$ and $v_2$ who has 3 available colors each.

Now, suppose that $v_0\neq v_3$, and that $d(v_3)\leq \Delta-1$. We color $G_1-\{v_1,v_2\}$ by minimality of $G_1$ and extend the coloring to $G_1$ by coloring $v_1$ then $v_2$, which is possible since they have respectively 1 and 2 available colors left. Thus, $d(v_3)=\Delta$ and the same holds for $d(v_0)$ by symmetry.
\end{proof}

\begin{figure}[H]
\begin{subfigure}[b]{0.32\textwidth}
\centering
\begin{tikzpicture}[scale=0.6]{thick}
\begin{scope}[every node/.style={circle,draw,minimum size=1pt,inner sep=2}]
	\node[label={above:$v_0$}] (0) at (-2,0) {};
    \node[fill,label={above:$v_1$},label={below:$2$}] (1) at (0,0) {};
    \node[fill,label={above:$v_2$},label={below:$6$}] (2) at (2,0) {};
    \node[fill,label={above:$v_3$},label={below:$2$}] (3) at (4,0) {};
    \node[label={above:$v_4$}] (4) at (6,0) {};
\end{scope}

\begin{scope}[every edge/.style={draw=black}]
    \path (0) edge (4);
\end{scope}
\end{tikzpicture}
\caption{A $3^+$-path.}
\end{subfigure}
\begin{subfigure}[b]{0.32\textwidth}
\centering
\begin{tikzpicture}[scale=0.6]{thick}
\begin{scope}[every node/.style={circle,draw,minimum size=1pt,inner sep=2}]
	\node[label={left:$v_0$}] (0) at (0,0) {};
    \node[fill,label={above:$v_1$},label={left:$3$}] (1) at (-1,1) {};
    \node[fill,label={above:$v_2$},label={right:$3$}] (2) at (1,1) {};
\end{scope}

\begin{scope}[every edge/.style={draw=black}]
    \path (0) edge (1);
    \path (1) edge (2);
    \path (2) edge (0);
\end{scope}
\end{tikzpicture}
\caption{A $2$-path where both endvertices are the same.}
\end{subfigure}
\begin{subfigure}[b]{0.32\textwidth}
\centering
\begin{tikzpicture}[scale=0.6]{thick}
\begin{scope}[every node/.style={circle,draw,minimum size=1pt,inner sep=2}]
	\node[label={above:$v_0$}] (0) at (-2,0) {};
    \node[fill,label={above:$v_1$},label={below:$1$}] (1) at (0,0) {};
    \node[fill,label={above:$v_2$},label={below:$2$}] (2) at (2,0) {};
    \node[ellipse,label={above:$v_3$}] (3) at (5.5,0) {$(\Delta-1)^-$};
\end{scope}

\begin{scope}[every edge/.style={draw=black}]
    \path (0) edge (3);
\end{scope}
\end{tikzpicture}
\caption{A $2$-path incident to a $(\Delta-1)^-$-vertex.}
\end{subfigure}
\caption{\ }
\end{figure}

\begin{lemma} \label{tree lemma1}
Graph $G_1$ has no cycles consisting of $2$-paths.
\end{lemma}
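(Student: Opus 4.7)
The plan is to argue by contradiction using the minimality of $G_1$. Suppose $G_1$ contains a cycle $C$ made entirely of $2$-paths, and write $C = u_1 x_1 y_1 u_2 x_2 y_2 \cdots u_k x_k y_k u_1$ (indices mod $k$), where $u_1, \ldots, u_k$ are the $3^+$-endvertices and $x_i, y_i$ are the two internal $2$-vertices of the $i$-th $2$-path $P_i$ from $u_i$ to $u_{i+1}$. By \Cref{2-path lemma1} each $u_i$ has degree $\Delta$ and $k \geq 2$ (since a $2$-path looping back to itself is forbidden). I would then set $H = G_1 - \{x_i, y_i : 1 \leq i \leq k\}$, $2$-distance color $H$ by minimality, and try to extend this coloring to the $2k$ removed vertices.

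The first step is to lower-bound the set $L(z)$ of still-available colors at each removed $2$-vertex $z$. For $x_i$,
$$N^*_{G_1}(x_i) \;=\; \{u_i, y_i, u_{i+1}\} \;\cup\; (N_{G_1}(u_i) \setminus \{x_i\}),$$
which has size exactly $\Delta + 2$ (since the cycle is simple and $u_i \not\sim u_{i+1}$). Inside $N^*_{G_1}(x_i)$ the uncolored members are $y_i$ together with $y_{i-1}$ (the $2$-vertex of the adjacent path $P_{i-1}$ attached to $u_i$), so only $\Delta$ colored vertices forbid colors at $x_i$. Hence $|L(x_i)| \geq 2$, and by symmetry $|L(y_i)| \geq 2$.

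It remains to pick a color for each removed vertex from its list while respecting the distance-$\leq 2$ constraints among them. A direct check of all pairs inside $\{x_i, y_i\}_i$ shows that the only constrained ones are $x_i y_i$ (adjacent) and $y_i x_{i+1}$ (at distance $2$ via $u_{i+1}$); together they form the cycle $x_1 y_1 x_2 y_2 \cdots x_k y_k x_1$ of length $2k \geq 4$, which is even. Since every even cycle is $2$-choosable (by the Erd\H{o}s--Rubin--Taylor theorem), the lists $L(\cdot)$ of size $\geq 2$ suffice to extend the coloring to $G_1$, contradicting minimality.

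The delicate step is the list-size lower bound: the naive estimate gives only $|L(x_i)| \geq (\Delta+2) - (\Delta+1) = 1$, which would be hopeless. The improvement to $2$ relies crucially on the adjacent $2$-path $P_{i-1}$ contributing a second uncolored neighbor $y_{i-1}$ of $u_i$, and this tight margin is exactly what the $2$-choosability of even cycles consumes.
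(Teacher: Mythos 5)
Your proof is correct and follows essentially the same route as the paper: delete the internal $2$-vertices of the cycle's $2$-paths, color the rest by minimality of $G_1$, observe that each deleted vertex keeps at least two available colors (the key point being the second uncolored neighbour of each endvertex coming from the adjacent $2$-path), and finish by $2$-choosability of the even constraint cycle. The only difference is cosmetic: you spell out the list-size count and the structure of the constraint cycle, which the paper states tersely.
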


\begin{proof}
Suppose that $G_1$ contains a cycle consisting of $k$ 2-paths (see \Cref{tree lemma figure}). We remove all vertices $v_{3i+1}$ and $v_{3i+2}$ for $0\leq i\leq k-1$. Consider a coloring of the resulting graph. It is then possible to color $v_1, v_2, v_4, \dots, v_{3k-1}$ since each of them has at least two choices of colors (as $d(v_0)=d(v_3)=\dots=d(v_{3(k-1)})=\Delta$ due to \Cref{2-path lemma1}) and by 2-choosability of even cycles.
\end{proof}
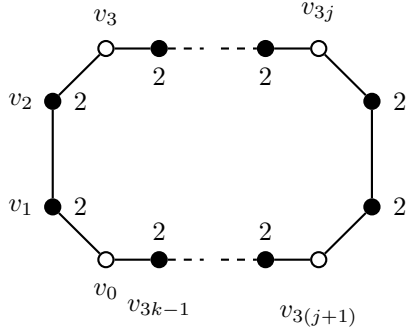
\begin{figure}[H]
\begin{center}
\begin{tikzpicture}[scale=0.7,rotate=90]
\begin{scope}[every node/.style={circle,thick,draw,minimum size=1pt,inner sep=2}]
    \node[label={above:$v_3$}] (0) at (-1,-1) {};

    \node[label={below:$v_0$}] (100) at (-5,-1) {};

    \node[fill,label={left:$v_2$},label={right:$2$}] (1) at (-2,0) {};
    \node[fill,label={left:$v_1$},label={right:$2$}] (3) at (-4,0) {};

    \node[fill,label={below:$2$}] (4) at (-1,-2) {};
    \node[draw=none] (5) at (-1,-3) {};
    \node[fill,label={below:$2$}] (6) at (-1,-4) {};

    \node[label={above:$v_{3j}$}] (200) at (-1,-5) {};

    \node[fill,label={right:$2$}] (7) at (-2,-6) {};
    \node[fill,label={right:$2$}] (9) at (-4,-6) {};

    \node[label={below:$v_{3(j+1)}$}] (300) at (-5,-5) {};

    \node[fill,label={below:$v_{3k-1}$},label={above:$2$}] (14) at (-5,-2) {};
    \node[draw=none] (15) at (-5,-3) {};
    \node[fill,label={above:$2$}] (16) at (-5,-4) {};
\end{scope}

\begin{scope}[every edge/.style={draw=black,thick}]
    \path (0) edge (1);
    \path (1) edge (3);
    \path (3) edge (100);
        \path (0) edge (4);
      \path[dashed] (4) edge (5);
      \path[dashed] (5) edge (6);
      \path (6) edge (200);
          \path (200) edge (7);
      \path (7) edge (9);
      \path (9) edge (300);
          \path (100) edge (14);
      \path[dashed] (14) edge (15);
      \path[dashed] (15) edge (16);
      \path (16) edge (300);
\end{scope}
\end{tikzpicture}
\vspace{-0.5cm}
\caption{A cycle consisting of consecutive $2$-paths.}
\label{tree lemma figure}
\end{center}
\end{figure}

\begin{lemma}\label{111-5}
Consider a $(1,1,1)$-vertex $u$. The other endvertices of the $1$-paths incident to $u$ are all distincts and are $\Delta$-vertices.
\end{lemma}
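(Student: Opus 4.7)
The plan is to argue by contradiction: I would assume that either (a) some $y_i$ satisfies $d(y_i) \leq \Delta-1$, or (b) $y_i = y_j$ for some $i \neq j$, and in each case exhibit a reducible configuration contradicting the minimality of $G_1$. Name the three neighbors of $u$ by $x_1, x_2, x_3$ (which are $2$-vertices by the definition of a $1$-path) and, for each $i$, let $y_i$ be the other endvertex of the $1$-path through $x_i$. By definition $y_i$ is a $3^+$-vertex, and $y_i \notin \{u, x_1, x_2, x_3\}$ since $G_1$ is simple; moreover $u \notin N(y_i)$ since the only neighbors of $u$ are the $x_j$'s.

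For case (a), I would take without loss of generality $d(y_1) \leq \Delta - 1$, let $H = G_1 - \{u, x_1\}$, color $H$ by minimality, and extend by coloring $x_1$ first and then $u$. Since $N^*_G(x_1) \subseteq \{u, y_1, x_2, x_3\} \cup N(y_1)$ and only $u$ is uncolored among these, $x_1$ sees at most $d(y_1) + 2 \leq \Delta + 1$ colors, leaving at least one available. Then $N^*_G(u) = \{x_1, x_2, x_3, y_1, y_2, y_3\}$ has size at most $6 \leq \Delta$ (using $\Delta \geq 6$), so $u$ has $\geq 2$ available colors.

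For case (b), I would take without loss of generality $y_1 = y_2 = y$ and set $H = G_1 - \{u, x_1, x_2\}$. After coloring $H$ by minimality, extend by coloring $x_1$, then $x_2$, then $u$. The crucial observation is that $x_2 \in N(y)$, producing an overlap that shrinks $|N^*_G(x_1)|$ to at most $d(y)+2$: combined with $u$ and $x_2$ still being uncolored at this step, $x_1$ sees at most $d(y) \leq \Delta$ colors, leaving $\geq 2$ available. By the symmetric bound, $x_2$ has $\geq 1$ available color once $x_1$ is colored (now $u$ is the only uncolored vertex in $N^*_G(x_2)$). Finally, the identification $y_1 = y_2$ gives $|N^*_G(u)| \leq 5$, so $u$ has $\geq \Delta - 3 \geq 3$ available colors.

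The main subtlety — which I expect to be the trickiest part — is accounting for any further coincidences among the set $\{u, x_1, x_2, x_3, y_1, y_2, y_3\}$ (for instance all three $y_i$ equal, or small degree occurring together with a coincidence). Since $y_i \notin \{u, x_1, x_2, x_3\}$, the only possible extra coincidences are among the $y_i$'s themselves, and any such identification can only decrease the sizes of the relevant $2$-distance neighborhoods, so all the inequalities above continue to hold. This contradicts the minimality of $G_1$ and completes the proof.
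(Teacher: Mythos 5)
There is a genuine gap in your case (a). When you set $H = G_1 - \{u, x_1\}$ and invoke minimality, you only obtain a $2$-distance coloring \emph{of $H$}, and this need not be a valid partial $2$-distance coloring of $G_1$: the vertices $x_2$ and $x_3$ are at distance $2$ in $G_1$ (through $u$), but after deleting $u$ their distance in $H$ can exceed $2$ (their only other neighbors are $y_2$ and $y_3$, which may be distinct and far apart). The coloring of $H$ may therefore assign $x_2$ and $x_3$ the same color, and no choice of colors for $u$ and $x_1$ can repair that. Your case (b) does not suffer from this, since there every pair of neighbors of $u$ contains a deleted vertex, and the only distance-$2$ pairs through $x_1$ or $x_2$ involve the deleted vertex $u$; your counting there is correct.

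The repair is routine and is exactly what the paper does: delete $u$ together with all three of its $2$-neighbors (i.e.\ take $H = G_1 - \{u, x_1, x_2, x_3\}$), so that no distance-$2$ relation among surviving vertices is destroyed, and then recolor $x_1, x_2, x_3, u$ greedily in a suitable order — in case (a) one colors the two $x_i$ whose endvertices may have degree $\Delta$ first (each sees at most $\Delta+1$ colors at that point), then the one attached to $y_1$ with $d(y_1)\le \Delta-1$, then $u$. Your constraint counts would go through essentially unchanged under this modification, so the idea is right, but as written the extension step in case (a) is not justified.
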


\begin{proof}
Suppose there exists a $(1,1,1)$-vertex $u$ with three $2$-neighbors $u_1$, $u_2$, and $u_3$. Let $v_i$ be the other endvertex of $uu_iv_i$ for $1\leq i\leq 3$.

First, suppose by contradiction that $v_1=v_2$ (and possibly $=v_3$). We color $G_1-\{u,u_1,u_2,u_3\}$ by minimality of $G_1$. Then, we color $u_3$, $u_1$, $u_2$, and $u$ in this order, which is possible since they have at least respectively $2$, $3$, $3$, and $\Delta\geq 6$ colors. So, $v_1$, $v_2$, and $v_3$ are all distinct.

Now, suppose w.l.o.g. that $d(v_1)\leq \Delta-1$ by contradiction. We color $G_1-\{u,u_1,u_2,u_3\}$ by minimality of $G_1$. Then, we color $u_3$, $u_2$, $u_1$, and $u$ in this order. So, $d(v_1)=d(v_2)=d(v_3)=\Delta$. 
\end{proof}

\begin{figure}[H]
\begin{subfigure}[b]{0.49\textwidth}
\centering
\begin{tikzpicture}[scale=0.6]{thick}
\begin{scope}[every node/.style={circle,draw,minimum size=1pt,inner sep=2}]
	\node[label={[label distance=-10pt]below:$v_1=v_2$}] (0) at (-2,0) {};
    \node[fill,label={above:$u_1$},label={below:$3$}] (1) at (0,0) {};
    \node[fill,label={above:$u$},label={below:$6$}] (2) at (2,0) {};
    \node[fill,label={above:$u_3$},label={below:$2$}] (3) at (4,0) {};
    \node[label={above:$v_3$}] (4) at (6,0) {};
    
    \node[fill,label={above:$u_2$},label={below:$3$}] (2') at (0,2) {};
\end{scope}

\begin{scope}[every edge/.style={draw=black}]
    \path (0) edge (4);
    \path (0) edge (2');
    \path (2') edge (2);
\end{scope}
\end{tikzpicture}
\vspace*{-0.5cm}
\caption{A $(1,1,1)$-vertex that sees only two vertices at distance $2$.}
\end{subfigure}
\begin{subfigure}[b]{0.49\textwidth}
\centering
\begin{tikzpicture}[scale=0.6]{thick}
\begin{scope}[every node/.style={circle,draw,minimum size=1pt,inner sep=2}]
	\node[ellipse,label={above:$v_1$}] (0) at (-3.5,0) {$(\Delta-1)^-$};
    \node[fill,label={above:$u_1$},label={below:$3$}] (1) at (0,0) {};
    \node[fill,label={[label distance=+4pt]above left:$u$},label={below:$5$}] (2) at (2,0) {};
    \node[fill,label={above:$u_3$},label={below:$2$}] (3) at (4,0) {};
    \node[label={above:$v_3$}] (4) at (6,0) {};
    
    \node[fill,label={left:$u_2$},label={right:$2$}] (2') at (2,2) {};
    \node[label={above:$v_2$}] (2'') at (2,4) {};
\end{scope}

\begin{scope}[every edge/.style={draw=black}]
    \path (0) edge (4);
    \path (2) edge (2'');
\end{scope}
\end{tikzpicture}
\caption{A $(1,1,1)$-vertex that sees a $(\Delta-1)^-$-vertex at distance $2$.}
\end{subfigure}
\caption{\ }
\end{figure}

\begin{definition}[$(1,1,1)$-paths]
We call $v_0v_1v_2v_3v_4$ a $(1,1,1)$-path when $v_0$ and $v_4$ are $\Delta$-vertices, $v_1$ and $v_3$ are $2$-vertices, and $v_2$ is a $(1,1,1)$-vertex.
\end{definition}

\begin{lemma}\label{tree lemma1bis}
Graph $G_1$ has no cycles consisting of $(1,1,1)$-paths.
\end{lemma}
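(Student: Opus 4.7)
The plan is to mimic the proof of \Cref{tree lemma1}. Suppose for contradiction that $G_1$ contains a cycle formed by $k$ consecutive $(1,1,1)$-paths. Label the $\Delta$-endvertices along the cycle $a_0, \dots, a_{k-1}$ (indices mod $k$) and for each $i$ write the $(1,1,1)$-path from $a_i$ to $a_{i+1}$ as $a_i\,x_i\,y_i\,z_i\,a_{i+1}$, where $x_i, z_i$ are the two $2$-vertices and $y_i$ is the central $(1,1,1)$-vertex; by \Cref{111-5}, the third branch at $y_i$ is a $1$-path $y_i\,p_i\,q_i$ with $p_i$ a $2$-vertex and $q_i$ a $\Delta$-vertex. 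Set $S = \{x_i, y_i, z_i, p_i : 0 \le i < k\}$ and, by minimality of $G_1$, fix a $2$-distance $(\Delta+2)$-coloring $\phi$ of $H := G_1 - S$. The goal is to extend $\phi$ to $G_1$.

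Counting the colored $2$-distance neighbors of each vertex of $S$ under $\phi$ yields the following lists of admissible colors: $|L(p_i)| \ge 2$ (its colored $2$-distance neighborhood consists of $q_i$ plus the $\Delta - 1$ other neighbors of $q_i$), $|L(y_i)| \ge \Delta - 1 \ge 5$ (only $a_i, a_{i+1}, q_i$ are colored $2$-distance neighbors), and $|L(x_i)|, |L(z_i)| \ge 3$ (namely $a_i$, resp.\ $a_{i+1}$, and its $\Delta-2$ off-cycle neighbors). I then extend $\phi$ in three phases. \emph{Phase 1:} color each $p_i$ greedily; since $|L(p_i)| \ge 2$ persists at every step, this is possible. \emph{Phase 2:} color the $x_i$'s and $z_i$'s; after Phase~1 each has a list of size $\ge 2$ (coloring $p_i$ costs at most one color), and the only remaining $2$-distance constraints among them are $x_i \ne z_i$ (via $y_i$) and $z_i \ne x_{i+1}$ (via $a_{i+1}$), which form the even cycle $C_{2k}$---$2$-choosable. \emph{Phase 3:} color each $y_i$; its colored $2$-distance neighbors lie in $\{x_i, z_i, p_i, a_i, a_{i+1}, q_i\}$, so at most $6$ colors are forbidden, leaving $\ge \Delta - 4 \ge 2$ choices, and distinct $y_i$'s being at distance $\ge 3$ in $G_1$ may be colored independently.

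The main technical subtlety is the bookkeeping of ``coincidences'' such as $q_i = q_j$ for distinct $i, j$, or $q_i \in N(a_j)$, which place additional vertices of $S$ into the $2$-distance neighborhoods of some $p_i$'s or $x_j$'s. Each such coincidence simultaneously enlarges the corresponding initial list by one (one more uncolored $2$-distance neighbor under $\phi$) and, at the relevant later phase, contributes one additional already-colored vertex that removes one color; the two effects cancel, so the lower bounds above remain valid throughout. The construction thus yields a $2$-distance $(\Delta+2)$-coloring of $G_1$---a contradiction.
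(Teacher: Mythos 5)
Your proof is correct and follows essentially the same strategy as the paper's: delete the internal vertices of the cycle's $(1,1,1)$-paths, color the $2$-vertices on the cycle via $2$-choosability of the even cycle $C_{2k}$ (using \Cref{111-5} to guarantee lists of size $2$), and finish the central $(1,1,1)$-vertices greedily since each sees at most $6$ colors. The only cosmetic difference is that you also delete and re-color the pendant $2$-vertices $p_i$, which the paper simply leaves colored; your coincidence bookkeeping correctly absorbs this extra step.
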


\begin{proof}
Suppose that $G$ contains a cycle consisting of $k$ $(1,1,1)$-paths (see \Cref{tree lemma figure1bis}). We remove all vertices $v_{4i+1}$, $v_{4i+2}$, $v_{4i+3}$ for $0\leq i\leq k-1$. Consider a coloring of the resulting graph. We color $v_1, v_3, v_5, \dots, v_{4k-1}$ since each of them has at least two choices of colors (as $d(v_0)=d(v_4)=\dots=d(v_{4(k-1)})=\Delta$ due to \Cref{111-5}) and by 2-choosability of even cycles. Finally, it is easy to color greedily $v_2,v_6,\dots,v_{4k-2}$ since they each have at most six forbidden colors.
\end{proof}

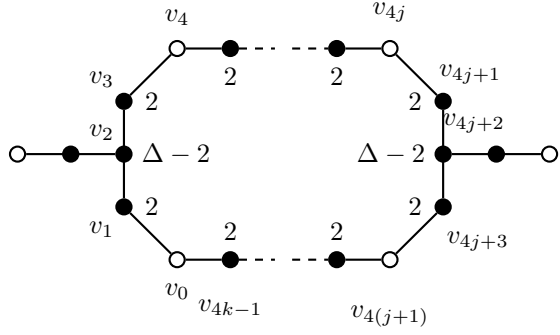
\begin{figure}[H]
\begin{center}
\begin{tikzpicture}[scale=0.7,rotate=90]
\begin{scope}[every node/.style={circle,thick,draw,minimum size=1pt,inner sep=2}]
    \node[label={above:$v_4$}] (0) at (-1,-1) {};

    \node[label={below:$v_0$}] (100) at (-5,-1) {};

    \node[fill,label={above left:$v_3$},label={right:$2$}] (1) at (-2,0) {};
    \node[fill,label={above left:$v_2$},label={right:$\Delta-2$}] (2) at (-3,0) {};
    \node[fill] (2') at (-3,1) {};
    \node (2'') at (-3,2) {};
    \node[fill,label={below left:$v_1$},label={right:$2$}] (3) at (-4,0) {};

    \node[fill,label={below:$2$}] (4) at (-1,-2) {};
    \node[draw=none] (5) at (-1,-3) {};
    \node[fill,label={below:$2$}] (6) at (-1,-4) {};

    \node[label={above:$v_{4j}$}] (200) at (-1,-5) {};

    \node[fill,label={[label distance=-4pt]above right:$v_{4j+1}$},label={left:$2$}] (7) at (-2,-6) {};
    \node[fill,label={[label distance=-2pt]above right:$v_{4j+2}$},label={left:$\Delta-2$}] (8) at (-3,-6) {};
    \node[fill] (8') at (-3,-7) {};
    \node (8'') at (-3,-8) {};
    \node[fill,label={below right:$v_{4j+3}$},label={left:$2$}] (9) at (-4,-6) {};

    \node[label={below:$v_{4(j+1)}$}] (300) at (-5,-5) {};

    \node[fill,label={below:$v_{4k-1}$},label={above:$2$}] (14) at (-5,-2) {};
    \node[draw=none] (15) at (-5,-3) {};
    \node[fill,label={above:$2$}] (16) at (-5,-4) {};
\end{scope}

\begin{scope}[every edge/.style={draw=black,thick}]
    \path (0) edge (1);
    \path (1) edge (3);
    \path (3) edge (100);
        \path (0) edge (4);
      \path[dashed] (4) edge (5);
      \path[dashed] (5) edge (6);
      \path (6) edge (200);
          \path (200) edge (7);
      \path (7) edge (9);
      \path (9) edge (300);
          \path (100) edge (14);
      \path[dashed] (14) edge (15);
      \path[dashed] (15) edge (16);
      \path (16) edge (300);
     
     \path (2) edge (2'');
     \path (8) edge (8'');
\end{scope}
\end{tikzpicture}
\vspace{-0.5cm}
\caption{A cycle consisting of consecutive $(1,1,1)$-paths.}
\label{tree lemma figure1bis}
\end{center}
\end{figure}

\begin{lemma}\label{3-011-5}
A $(1,1,0)$-vertex with a $(\db{3}{\Delta-3})$-neighbor shares its $2$-neighbors with $\Delta$-vertices.
\end{lemma}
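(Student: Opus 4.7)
Assume for contradiction that the $(1,1,0)$-vertex $u$ has a $(\db{3}{\Delta-3})$-neighbor $w$ but one of its two $2$-neighbors, say $u_1$ on the $1$-path $uu_1v_1$, has $d(v_1) \leq \Delta - 1$; call the other $2$-neighbor $u_2$ and denote by $v_2$ the far endpoint of the $1$-path $uu_2v_2$. The plan is to exhibit a $2$-distance $(\Delta+2)$-coloring of $G_1$, contradicting minimality. Concretely, I would remove $\{u, u_1, u_2\}$ from $G_1$, invoke minimality of $G_1$ to color the smaller graph, and extend back to $G_1$ by recoloring these three vertices in the order $u_2, u_1, u$.

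A short count of colored $2$-distance neighbors at each step shows the extension succeeds. First, $u_2$ sees only $v_2$, the other neighbors of $v_2$, and $w$ colored, giving at most $1 + (d(v_2) - 1) + 1 \leq \Delta + 1$ forbidden colors. Next, once $u_2$ is colored, $u_1$ additionally sees it at distance $2$ via $u$, giving at most $d(v_1) + 2 \leq \Delta + 1$ forbidden colors thanks to the contradiction hypothesis $d(v_1) \leq \Delta - 1$. Finally, $u$ sees $u_1, u_2, w, v_1, v_2$ together with the $d(w) - 1$ other neighbors of $w$, totalling at most $d(w) + 4 \leq \Delta + 1$ forbidden colors by the hypothesis $d(w) \leq \Delta - 3$. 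At each step at least one of the $\Delta + 2$ colors remains available, so the coloring extends.

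The only subtle point is the choice of coloring order. The naive order $u_1, u_2, u$ fails: once $u_1$ is colored, $u_2$ would see it in addition to $v_2$, $N(v_2) \setminus \{u_2\}$, and $w$, for $d(v_2) + 2 = \Delta + 2$ forbidden colors when $v_2$ is a $\Delta$-vertex, leaving no room. By coloring $u_2$ first and $u$ last, the slack from $d(v_1) \leq \Delta - 1$ is spent precisely at $u_1$'s step, and the small degree of $w$ absorbs the constraints accumulated by $u$. Once all three vertices are recolored, $G_1$ is properly $2$-distance $(\Delta+2)$-colored, contradicting its minimality. The same argument with the roles of $v_1$ and $v_2$ swapped forces $d(v_2) = \Delta$ as well, yielding the full conclusion that both $2$-neighbors of $u$ are shared with $\Delta$-vertices.
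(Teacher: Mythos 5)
Your proposal is correct and matches the paper's proof essentially verbatim: both delete $\{u,u_1,u_2\}$, invoke minimality, and recolor in the order $u_2$, $u_1$, $u$, with the same counts (at least $1$, $2$, and $3$ available colors respectively before the extension begins). Your extra remark on why the order $u_1,u_2,u$ would fail is a sound sanity check but not needed.
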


\begin{proof}
Suppose that there exists a $(1,1,0)$-vertex $u$ with a $(\db{3}{\Delta-3})$-neighbor. Let $u_1$ and $u_2$ be its $2$-neighbors. Let $v\neq u$ be the other neighbor of $u_1$. Suppose w.l.o.g. that $d(v)\leq \Delta-1$ by contradiction. We color $G_1-\{u,u_1,u_2\}$ by minimality of $G_1$. Then, we color $u_2$, $u_1$, and $u$ in this order, which is possible since they have at least respectively 1, 2, and 3 colors as we have $\Delta+2$ colors.
\end{proof}

\begin{figure}[H]
\centering
\begin{tikzpicture}[scale=0.6]{thick}
\begin{scope}[every node/.style={circle,draw,minimum size=1pt,inner sep=2}]
	\node[ellipse,label={above:$v$}] (0) at (-3.5,0) {$(\Delta-1)^-$};
    \node[fill,label={above:$u_1$},label={below:$2$}] (1) at (0,0) {};
    \node[fill,label={[label distance=+4pt]above left:$u$},label={below:$3$}] (2) at (2,0) {};
    \node[fill,label={above:$u_2$},label={below:$1$}] (3) at (4,0) {};
    \node (4) at (6,0) {};
    
    \node[ellipse] (2') at (2,2.5) {$\db{3}{\Delta-3}$};
\end{scope}

\begin{scope}[every edge/.style={draw=black}]
    \path (0) edge (4);
    \path (2) edge (2');
\end{scope}
\end{tikzpicture}
\caption{A $(1,1,0)$-vertex with a $(\db{3}{\Delta-3})$-neighbor that shares a $2$-neighbor with a $(\Delta-1)^-$-vertex.}
\end{figure}

\begin{lemma}\label{weird cases lemma}
A $\Delta$-vertex $u$ cannot be incident to a $2$-path, a $(1,1,1)$-path, and $\Delta-2$ other $1^+$-paths $uu_iv_i$ ($1\leq i\leq \Delta-2$) where each $v_i$ is a $3^-$-vertex.
\end{lemma}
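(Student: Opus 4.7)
The plan is to invoke the minimality of $G_1$ on a carefully chosen neighborhood of $u$ and extend the coloring. I will denote the $2$-path incident to $u$ by $u\,a_1\,a_2\,w$ (with $w$ a $\Delta$-vertex by \Cref{2-path lemma1}), the $(1,1,1)$-path by $u\,b_1\,q\,b_2\,s$ (where $q$ is the $(1,1,1)$-vertex; its third $2$-neighbor $b_3$ reaches a $\Delta$-vertex $t$ by \Cref{111-5}), and the remaining $\Delta-2$ paths by $u\,u_i\,v_i$ for $1\leq i\leq\Delta-2$. The set to uncolor will be $S=\{u,a_1,a_2,b_1,q,u_1,\ldots,u_{\Delta-2}\}$, a collection of $\Delta+3$ distinct vertices. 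By minimality, $G_1-S$ admits a coloring, which I aim to extend.

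I will first compute the list of available colors $L(v)$ for each $v\in S$: $|L(u)|\geq 4$ (only $v_1,\ldots,v_{\Delta-2}$ are colored and at distance $\leq 2$ from $u$); $|L(a_1)|\geq\Delta+1$ (only $w$ is forbidden); $|L(a_2)|=2$, since the closed neighborhood of $w$ uses $\Delta$ distinct colors --- I will call these two remaining colors $\alpha,\beta$; $|L(b_1)|\geq\Delta$; $|L(q)|\geq\Delta-2$; and $|L(u_i)|\geq\Delta-1$. I will then observe that in the distance-$2$ graph restricted to $S$, the vertices $\{u,a_1,b_1,u_1,\ldots,u_{\Delta-2}\}$ form a $(\Delta+1)$-clique through $u$, while $a_2$ (resp. $q$) is adjacent only to $a_1,u$ (resp. $b_1,u$).

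The extension will proceed in four steps. First, I will color $u$ with some $c_u\in L(u)\setminus\{\alpha,\beta\}$, which is possible since $|L(u)|\geq 4>2$. Second, I will greedily color $u_1,\ldots,u_{\Delta-2},b_1,a_1$ in that order, where a simple count (list size versus already-colored clique-mates) shows each step leaves at least one valid color --- for instance, $a_1$ comes last and has $|L(a_1)|\geq\Delta+1$ against $\Delta$ already-used clique-colors. Third, I will color $a_2$ from $\{\alpha,\beta\}\setminus\{c_u,c_{a_1}\}$, which is nonempty because $c_u\notin\{\alpha,\beta\}$ by step one and $c_{a_1}$ removes at most one of $\alpha,\beta$. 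Fourth, I will color $q$ from $L(q)\setminus\{c_u,c_{b_1}\}$, a set of size at least $\Delta-4\geq 2$.

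The main obstacle will be step three: the list $L(a_2)$ has only two colors, and if both were used by $u$ and $a_1$ we would be stuck. The workaround is to preemptively forbid $\{\alpha,\beta\}$ for $u$, which is affordable because $|L(u)|\geq 4$. Once this reservation is made, all remaining extensions are routine greedy list-coloring.
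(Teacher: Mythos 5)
Your proof is correct and follows essentially the same route as the paper: you uncolor the same set of $\Delta+3$ vertices (the paper takes $H=\{u\}\cup N(u)\cup\{u'_\Delta\}$ and additionally uncolors the $(1,1,1)$-vertex), compute the same lower bounds on the lists, and use the same key trick of reserving the two colors available to the second vertex of the $2$-path and coloring $u$ to avoid them before finishing greedily in the same order. The only cosmetic difference is that the paper explicitly records the non-coincidences $q,w\notin\{v_1,\dots,v_{\Delta-2}\}$ (via \Cref{111-5} and \Cref{2-path lemma1}) and the harmless case $v_i=v_j$, which you assert implicitly.
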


\begin{proof}
Let $uu_{\Delta-1}v_{\Delta-1}\notin \{uu_iv_i|1\leq i\leq \Delta-2\}$ be a $1$-path where $v_{\Delta-1}$ is a $(1,1,1)$-vertex. Let $uu_{\Delta}u'_{\Delta}v_{\Delta}$ be a $2$-path incident to $u$ where $uu_{\Delta}u'_{\Delta}\notin \{uu_iv_i|1\leq i\leq \Delta-1\}$. Observe that $v_{\Delta-1}\notin \{v_i|1\leq i\leq \Delta-2\}$ due to \Cref{111-5} and $v_{\Delta}\notin \{v_i|1\leq i\leq \Delta-2\}$ due to \Cref{2-path lemma1}.

Let $H=u\cup N_G(u)\cup\{u'_{\Delta}\}$. We color $G-H$ by minimality of $G$ and we uncolor $v_{\Delta-1}$. Let $L(x)$ be the list of remaining colors for a vertex $x\in H\cup\{v_{\Delta-1}\}$. Observe that $|L(u)|\geq \Delta+2-(\Delta-2)\geq 4$, $|L(u'_{\Delta})|\geq 2$ (since $d(v_{\Delta})=\Delta$ by \Cref{2-path lemma1}), $|L(v_{\Delta-1})|\geq \Delta-2\geq 4$, $|L(u_i)|\geq \Delta-1$ (since $d(v_i)\leq 3$) for $1\leq i\leq \Delta-2$, $|L(u_{\Delta-1})|\geq \Delta$, and $|L(u_{\Delta})|\geq \Delta+1$. We remove the extra colors from $L(u'_{\Delta})$ so that $|L(u'_{\Delta})|=2$. We color $u$ with a color that is not in $L(u'_{\Delta})$, then $u_1$, $u_2$, $\dots$, $u_{\Delta}$, $v_{\Delta-1}$, and $u'_{\Delta}$ in this order. Observe that when $v_i=v_j$ for $1\leq i\leq j\leq \Delta-2$, then $|L(u_i)|\geq \Delta$ and $|L(u_j)|\geq \Delta$ so the order in our coloring still hold. Thus, we obtain a valid coloring of $G$, which is a contradiction.
\end{proof}

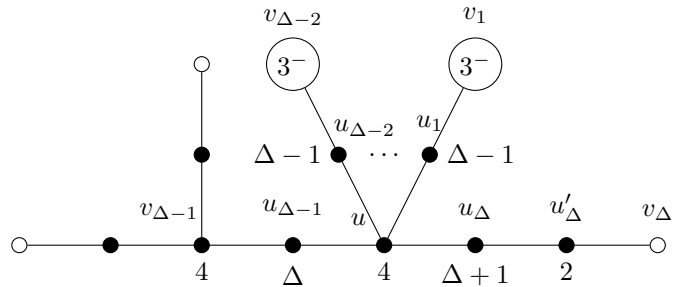
\begin{figure}[H]
\centering
\begin{tikzpicture}[scale=0.6]{thick}
\begin{scope}[every node/.style={circle,draw,minimum size=1pt,inner sep=2}]
	\node (02) at (-6,0) {};	
	\node[fill] (01) at (-4,0) {}; 	
	\node[fill,label={above left:$v_{\Delta-1}$},label={below:$4$}] (0) at (-2,0) {};
    \node[fill,label={[label distance=-4pt]above:$u_{\Delta-1}$},label={below:$\Delta$}] (1) at (0,0) {};
    \node[fill,label={[label distance=+4pt]above left:$u$},label={below:$4$}] (2) at (2,0) {};
    \node[fill,label={above:$u_{\Delta}$},label={[label distance=-8pt]below:$\Delta+1$}] (3) at (4,0) {};
    \node[fill,label={above:$u'_{\Delta}$},label={below:$2$}] (4) at (6,0) {};
	\node[label={above:$v_{\Delta}$}] (5) at (8,0) {};
    
    \node[fill] (0') at (-2,2) {};
    \node (0'') at (-2,4) {};
    
    \node[fill,label={above:$u_1$},label={right:$\Delta-1$},label={[label distance=+4pt]left:$\dots$}] (2') at (3,2) {};
    \node[label={above:$v_1$}] (2'') at (4,4) {$3^-$};
    
    \node[fill,label={[label distance=-4pt]above right:$u_{\Delta-2}$},label={left:$\Delta-1$}] (21') at (1,2) {};
    \node[label={[label distance=-7pt]above:$v_{\Delta-2}$}] (21'') at (0,4) {$3^-$};
   
\end{scope}

\begin{scope}[every edge/.style={draw=black}]
    \path (02) edge (5);
    \path (0) edge (0'');
    \path (2) edge (2'');
    \path (2) edge (21'');
\end{scope}
\end{tikzpicture}
\caption{A $\Delta$-vertex incident to a $2$-path, a $(1,1,1)$-path, and $\Delta-2$ other $1^+$-paths with $3$-endvertices.}
\label{weird cases figure}
\end{figure}

\subsection{Discharging rules}

\begin{definition}[$2$-path sponsors] \label{2-path sponsor definition}
Consider the set of $2$-paths in $G$. By \Cref{2-path lemma1}, the endvertices of every 2-paths are $\Delta$-vertices and by \Cref{tree lemma1}, the graph induced by the edges of all the $2$-paths of $G$ is a forest ${\cal F}$. For each tree of ${\cal F}$, we choose one $\Delta$-vertex as an arbitrary root. Each $2$-path is assigned a unique {\em sponsor} which is the $\Delta$-endvertex that is further away from the root. See \Cref{2-path sponsor figure}.
\end{definition}

\begin{figure}[H]
\begin{center}
\begin{tikzpicture}[scale=0.6]
\begin{scope}[every node/.style={circle,thick,draw,minimum size=1pt,inner sep=1}]
    \node (0) at (-5,0) {$\Delta$};
    \node[fill] (3) at (-2.5,0) {};
	\node[draw=none] (2) at (-3,0.5) {};     
    \node[fill] (1) at (-3.5,0) {};

    \node[label={above:root}] (10) at (-1,0) {$\Delta$};
    \node[fill] (11) at (0.5,0) {};
	\node[draw=none] (12) at (1,0.5) {};     
    \node[fill] (13) at (1.5,0) {};

    \node (20) at (3,0) {$\Delta$};

    \node[fill] (14) at (-1,-1.5) {};
	\node[draw=none] (15) at (-0.5,-2) {};     
    \node[fill] (16) at (-1,-2.5) {};

    \node (30) at (-1,-4) {$\Delta$};
    \node[fill] (31) at (0.5,-4) {};
	\node[draw=none] (32) at (1,-3.5) {};     
    \node[fill] (33) at (1.5,-4) {};

    \node (40) at (3,-4) {$\Delta$};

    \node[fill] (34) at (-2.5,-4) {};
	\node[draw=none] (35) at (-3,-3.5) {};     
    \node[fill] (36) at (-3.5,-4) {};

    \node (50) at (-5,-4) {$\Delta$};
\end{scope}

\begin{scope}[every edge/.style={draw=black,thick}]
    \path (0) edge (10);
    \path (10) edge (20);
    \path (10) edge (30);
    \path (50) edge (30);
    \path (30) edge (40);
    \path[->] (0) edge[bend left] node[above] {sponsor} (2);
    \path[->] (50) edge[bend left] (35);
    \path[->] (40) edge[bend right] (32);
    \path[->] (20) edge[bend right] (12);
    \path[->] (30) edge[bend right] (15);
    
\end{scope}

\draw (-3,0) ellipse (1cm and 0.5cm);
\draw (1,0) ellipse (1cm and 0.5cm);
\draw (-3,-4) ellipse (1cm and 0.5cm);
\draw (1,-4) ellipse (1cm and 0.5cm);
\draw[rotate=90] (-2,1) ellipse (1cm and 0.5cm);
\end{tikzpicture}
\caption{The sponsor assignment in a tree consisting of 2-paths.}
\label{2-path sponsor figure}
\end{center}
\end{figure}
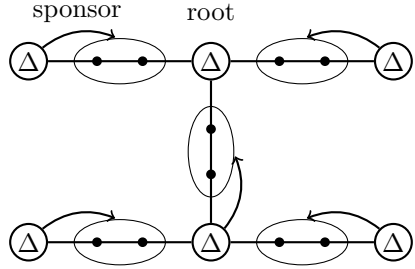

\begin{definition}[$(1,1,1)$-path sponsors] \label{111-path sponsor definition}
Consider the set of $(1,1,1)$-paths in $G$. By \Cref{111-5}, the endvertices of every $(1,1,1)$-paths are $\Delta$-vertices and by \Cref{tree lemma1bis}, the graph induced by the edges of all the $(1,1,1)$-paths of $G$ is a forest ${\cal F}$. For each tree of ${\cal F}$, we choose one $\Delta$-vertex as an arbitrary root. Each $(1,1,1)$-vertex $v$ is assigned two {\em sponsors} which are the $\Delta$-vertices that are grandsons of $v$. See \Cref{111-path sponsor figure}.
\end{definition}

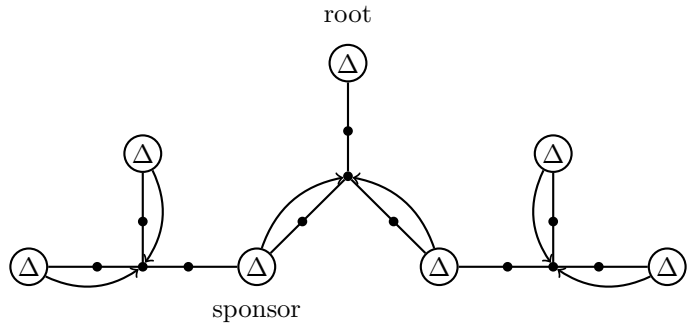
\begin{figure}[H]
\begin{center}
\begin{tikzpicture}[scale=0.6]
\begin{scope}[every node/.style={circle,thick,draw,minimum size=1pt,inner sep=1}]
    \node[label={above:root}] (10) at (-1,0.5) {$\Delta$};
    \node[fill] (14) at (-1,-1) {};
    \node[fill] (15) at (-1,-2) {};
    \node[fill] (16) at (0,-3) {};
    \node (17) at (1,-4) {$\Delta$}; 
    \node[fill] (18) at (-2,-3) {};
	\node[label={[label distance=-8pt]below:sponsor}] (19) at (-3,-4) {$\Delta$};
	
	\node[fill] (20) at (2.5,-4) {};
	\node[fill] (21) at (3.5,-4) {};
	\node[fill] (22) at (4.5,-4) {};
	\node (23) at (6,-4) {$\Delta$};
	
	\node[fill] (24) at (3.5,-3) {};
	\node (25) at (3.5,-1.5) {$\Delta$};
	
	\node[fill] (30) at (-4.5,-4) {};
	\node[fill] (31) at (-5.5,-4) {};
	\node[fill] (32) at (-6.5,-4) {};
	\node (33) at (-8,-4) {$\Delta$};
	
	\node[fill] (34) at (-5.5,-3) {};
	\node (35) at (-5.5,-1.5) {$\Delta$};
\end{scope}

\begin{scope}[every edge/.style={draw=black,thick}]
    \path (10) edge (15);
    \path (15) edge (17);
    \path (15) edge (19);
    \path (17) edge (23);
    \path (21) edge (25);
    \path (19) edge (33);
    \path (31) edge (35);
    
    \path[->] (19) edge[bend left] (15);
    \path[->] (17) edge[bend right] (15);
    \path[->] (33) edge[bend right] (31);
    \path[->] (35) edge[bend left] (31);
    \path[->] (23) edge[bend left] (21);
    \path[->] (25) edge[bend right] (21);
\end{scope}
\end{tikzpicture}
\caption{The sponsor assignment in a tree consisting of $(1,1,1)$-paths.}
\label{111-path sponsor figure}
\end{center}
\end{figure}

Since we have $\mad(G_1)<\frac83$, we must have 
\begin{equation}\label{equation1}
\sum_{v\in V(G_1)} \left(3d(v)-8\right) < 0
\end{equation}

We assign to each vertex $v$ the charge $\mu(v)=3d(v)-8$. To prove the non-existence of $G_1$, we will redistribute the charges preserving their sum and obtaining a non-negative total charge, which will contradict \Cref{equation1}.

\medskip

\begin{itemize}
\item[\ru0] (see \Cref{r0 figure}): Every $3^+$-vertex gives 1 to each 2-neighbor on an incident $1$-path.

\item[\ru1] (see \Cref{r1 figure}): Let $u$ be incident to a $2$-path $P=uu_1u_2v$.
\begin{itemize}
\item[(i)] If $u$ is not $P$'s sponsor, then $u$ gives $\frac32$ to $u_1$.
\item[(ii)] If $u$ is $P$'s sponsor, then $u$ gives 2 to $u_1$ and $\frac12$ to $u_2$.
\end{itemize}

\item[\ru2] (see \Cref{r2 figure}): Every $4^+$-vertex gives 1 to each $3$-neighbor.

\item[\ru3] (see \Cref{r3 figure}): Let $uvw$ be a $1$-path.
\begin{itemize}
\item[(i)] If $u$ is a $\Delta$-vertex, $w$ is a $(1,1,1)$-vertex, and $u$ is $w$'s sponsor, then $u$ gives $1$ to $w$.
\item[(ii)] If $u$ is a $\Delta$-vertex and $w$ is a $(1,1,0)$-vertex, then $u$ gives $\frac12$ to $w$.
\end{itemize}

\end{itemize}

\renewcommand{\thesubfigure}{\roman{subfigure}}

\begin{figure}[H]
\begin{minipage}[b]{0.2\textwidth}
\begin{center}
\begin{tikzpicture}[baseline=(0.north)]
\begin{scope}[every node/.style={circle,thick,draw,minimum size=1pt,inner sep=2,font=\small}]
    \node[minimum width=23pt] (0) at (1,2) {$3^+$};
    \node[minimum width=23pt] (100) at (-1,2) {$3^+$};
    \node[fill] (3) at (0,2) {};
\end{scope}

\begin{scope}[every edge/.style={draw=black,thick}]
    \path (0) edge (100);
      \path[->] (0) edge[bend right] node[above] {1} (3);
      \path[->] (100) edge[bend left] node[above] {1} (3);
\end{scope}
\end{tikzpicture}
\caption{\ru0.}
\label{r0 figure}
\end{center}
\end{minipage}
\begin{minipage}[b]{0.59\textwidth}
\begin{subfigure}[b]{0.49\textwidth}
\begin{center}
\begin{tikzpicture}[baseline=(3.north)]
\begin{scope}[every node/.style={circle,thick,draw,minimum size=1pt,inner sep=2,font=\small}]
    \node[minimum width=23pt,label={[label distance=-16pt]below:non-sponsor}] (0) at (-3,0) {$\Delta$};
    \node[fill] (2) at (-2,0) {};
    \node[fill] (3) at (-1,0) {};
    \node[minimum width=23pt,label={[label distance=-8pt]below:sponsor}] (4) at (0,0) {$\Delta$};
\end{scope}

\begin{scope}[every edge/.style={draw=black,thick}]
    \path (0) edge (4);    
    \path[->] (0) edge[bend left] node[above] {$\frac32$} (2);
\end{scope}
\end{tikzpicture}
\vspace*{-0.9cm}
\caption{\ }
\end{center}
\end{subfigure}
\begin{subfigure}[b]{0.49\textwidth}
\begin{center}
\begin{tikzpicture}[baseline=(3.north)]
\begin{scope}[every node/.style={circle,thick,draw,minimum size=1pt,inner sep=2,font=\small}]
    \node[minimum width=23pt,label={[label distance=-16pt]below:non-sponsor}] (0) at (-3,0) {$\Delta$};
    \node[fill] (2) at (-2,0) {};
    \node[fill] (3) at (-1,0) {};
    \node[minimum width=23pt,label={[label distance=-8pt]below:sponsor}] (4) at (0,0) {$\Delta$};
\end{scope}

\begin{scope}[every edge/.style={draw=black,thick}]
    \path (0) edge (4);    
    \path[->] (4) edge[bend left] node[below] {$2$} (3);
    \path[->] (4) edge[bend right] node[above] {$\frac12$} (2);
\end{scope}
\end{tikzpicture}
\vspace*{-0.9cm}
\caption{$2$-path sponsor.}
\end{center}
\end{subfigure}
\caption{\ru1.}
\label{r1 figure}
\end{minipage}
\begin{minipage}[b]{0.2\textwidth}
\begin{center}
\begin{tikzpicture}[baseline=(0.north)]
\begin{scope}[every node/.style={circle,thick,draw,minimum size=1pt,inner sep=2,font=\small}]
    \node[minimum width=23pt] (0) at (1,2) {$3$};
    \node[minimum width=23pt] (100) at (-0.5,2) {$4^+$};
\end{scope}

\begin{scope}[every edge/.style={draw=black,thick}]
    \path (0) edge (100);
      \path[->] (100) edge[bend left] node[above] {1} (0);
\end{scope}
\end{tikzpicture}
\caption{\ru2.}
\label{r2 figure}
\end{center}
\end{minipage}
\end{figure}

\begin{figure}[H]
\begin{center}
\begin{subfigure}[b]{0.49\textwidth}
\begin{center}
\begin{tikzpicture}[baseline=(3.north)]
\begin{scope}[every node/.style={circle,thick,draw,minimum size=1pt,inner sep=2,font=\small}]
    \node[minimum width=23pt,label={[label distance=-16pt]below:non-sponsor}] (1) at (-2,0) {$\Delta$};
    \node[fill] (1') at (-1,0) {};
    \node[fill,label={above:$w$}] (2) at (0,0) {};
    
    \node[fill,label={above:$v$}] (30) at (1,0.25) {};
    \node[minimum width=23pt,label={above:$u$}] (50) at (2,0.5) {$\Delta$};
    
    \node[fill] (31) at (1,-0.25) {};
    \node[minimum width=23pt] (51) at (2,-0.5) {$\Delta$};
    
    \node[draw=none] (100) at (3,-0.1) {sponsors};
\end{scope}

\begin{scope}[every edge/.style={draw=black,thick}]    
    \path (1) edge (2);
	
	\path (2) edge (50);
	\path (2) edge (51);
	
    \path[->] (50) edge[bend right=40] node[above] {$1$} (2);
    \path[->] (51) edge[bend left=40] node[below] {$1$} (2);
\end{scope}
\end{tikzpicture}
\vspace*{-0.8cm}
\caption{$(1,1,1)$-path sponsor.}
\end{center}
\end{subfigure}
\begin{subfigure}[b]{0.49\textwidth}
\begin{center}
\begin{tikzpicture}[baseline=(3.north)]
\begin{scope}[every node/.style={circle,thick,draw,minimum size=1pt,inner sep=2,font=\small}]
    \node[minimum width=23pt,label={above:$u$}] (1) at (-2,0) {$\Delta$};
    \node[fill,label={above:$v$}] (1') at (-1,0) {};
    \node[fill,label={above:$w$}] (2) at (0,0) {};
    
    \node[fill] (30) at (1,0.25) {};
    \node[minimum width=23pt] (50) at (2,0.5) {$3^+$};
    
    \node[minimum width=23pt] (51) at (2,-0.5) {$3^+$};
\end{scope}

\begin{scope}[every edge/.style={draw=black,thick}]    
    \path (1) edge (2);
	
	\path (2) edge (50);
	\path (2) edge (51);
	
    \path[->] (1) edge[bend left=40] node[above] {$\frac12$} (2);
\end{scope}
\end{tikzpicture}
\caption{\ }
\end{center}
\end{subfigure}
\caption{\ru3.}
\label{r3 figure}
\end{center}
\end{figure}

\subsection{Verifying that charges on each vertex are non-negative} \label{verification1}

Let $\mu^*$ be the assigned charges after the discharging procedure. In what follows, we prove that: $$\forall u \in V(G_1), \mu^*(u)\ge 0.$$

Let $u\in V(G_1)$.

\textbf{Case 1: }If $d(u)=2$, then recall that $\mu(u)=3\cdot 2 - 8 = - 2$.\\
There are no $3^+$-paths due to \Cref{3-path lemma1} so $u$ must lie on a $1$-path or a $2$-path.

If $u$ is on a $1$-path, then it has two $3^+$-neighbors which give it 1 each by \ru0. Thus,
$$ \mu^*(u)= -2 + 2\cdot 1 = 0.$$

If $u$ is on a $2$-path, then it either receives 2 from an adjacent sponsor by \ru1(ii), or it receives $\frac32+\frac12 = 2$ from an adjacent non-sponsor $\Delta$-neighbor and a distance 2 sponsor respectively by \ru1(i) and \ru1(ii). Thus,
$$ \mu^*(u)= -2 + 2 = 0.$$

\textbf{Case 2: }If $d(u)=3$, then recall that $\mu(u)=3\cdot 3 - 8 = 1$.\\
Observe that $u$ only gives charge away by \ru0 (charge 1 to each $2$-neighbor).

If $u$ is a $(1,1,1)$-vertex, then the other endvertices of the 1-paths incident to $u$ are all $\Delta$-vertices due to \Cref{111-5}. Moreover, by \Cref{111-path sponsor definition}, $u$ has two sponsors which give it $1$ each by \ru3(i). Hence,
$$ \mu^*(u)= 1 -3\cdot 1 + 2\cdot 1 = 0.$$

If $u$ is a  $(1,1,0)$-vertex with a $4^+$-neighbor, then it receives 1 from its neighbor by \ru2. Thus,
$$ \mu^*(u)= 1 -2\cdot 1 + 1 = 0.$$

If $u$ is a  $(1,1,0)$-vertex with a $3$-neighbor ($3\leq \Delta-3$ since $\Delta\geq 6$), then it receives $\frac12$ by \ru3(ii) from each of the other endvertices of its incident $1$-paths due to \Cref{3-011-5}. Thus,
$$ \mu^*(u)= 1 -2\cdot 1 + 2\cdot \frac12 = 0.$$

If $u$ is a $(1^-,0,0)$-vertex, then
$$ \mu^*(u)\geq 1 - 1 = 0.$$

\textbf{Case 3: }If $4\leq d(u)\leq \Delta-1$, then $u$ only gives away at most 1 to each neighbor by \ru0 or \ru2. Thus,
$$ \mu^*(u)\geq 3d(u) - 8 - d(u) \geq 2\cdot 4 - 8 = 0.$$ 

\textbf{Case 4: }If $d(u)=\Delta$, then we distinguish the following cases.
\begin{itemize}
\item If $u$ is neither a $2$-path sponsor nor a $(1,1,1)$-path sponsor, then observe that $u$ gives away at most $\frac32$ along an incident path by \ru1(i), a combination of \ru0 and \ru3(i), or less by \ru2. So at worst, $$\mu^*(u)\geq 3\Delta - 8 - \frac32\Delta \geq \frac32\cdot 6 - 8 =1.$$

\item If $u$ is a $2$-path sponsor but not a $(1,1,1)$-path sponsor, then $u$ gives $2+\frac12=\frac52$ to its unique sponsored incident $2$-path by \ru1(ii). For the other incident paths, it gives at most $\frac32$ like above. So,
$$ \mu^*(u)\geq 3\Delta - 8 - \frac52 - \frac32(\Delta-1) \geq \frac32\cdot 6 - 8 - \frac52 + \frac32 = 0.$$

\item If $u$ is a $(1,1,1)$-path sponsor but not a $2$-path sponsor, then $u$ gives $1+1=2$ to the unique incident $(1,1,1)$-path containing its assigned $(1,1,1)$-vertex $v$: $1$ to the $2$-neighbor by \ru0 and $1$ to $v$ by \ru3(i). Once again, $u$ gives at most $\frac32$ to the other incident paths. So,
$$ \mu^*(u)\geq 3\Delta - 8 - 2 - \frac32(\Delta-1) \geq \frac32\cdot 6 - 8 - 2 + \frac32 = \frac12.$$

\item If $u$ is both a $2$-path sponsor and a $(1,1,1)$-path sponsor, then $u$ gives $\frac52$ to its unique sponsored $2$-path and $2$ to its unique assigned $(1,1,1)$-vertex like above. 

Now, let us consider the other $\Delta-2$ paths incident to $u$. Observe that when $u$ gives $\frac32$ along an incident path either by \ru1(i) or by a combination of \ru0 and \ru3(ii), that path must be a $1^+$-path where the vertex at distance $2$ from $u$ is a $3^-$-vertex. Due to \Cref{weird cases lemma}, $u$ never has to give $\frac32$ to each of the $\Delta-2$ paths. As a result, there exists one path to which $u$ gives at most 1. So at worst,
$$ \mu^*(u)\geq 3\Delta - 8 - \frac52 -2 -1 - \frac32(\Delta-3) \geq \frac32\cdot 6 - 8 -\frac52 -2 -1 + \frac92 = 0. $$ 
\end{itemize}

We obtain a non-negative amount of charge on each vertex, which is impossible since the total amount of charge is negative. As such, $G_1$ cannot exist. That concludes the proof of \Cref{main theorem1}.

\section{Proof of \Cref{main theorem2}}
\label{sec3}

We will reuse similar notations to \Cref{sec2}. Let $G_2$ be a counterexample to \Cref{main theorem2} with the fewest number of vertices. Graph $G_2$ has maximum degree $\Delta\geq 10$ and $\mad<\frac{14}{5}$. The purpose of the proof is to prove that $G_2$ cannot exist.

\subsection{Structural properties of $G_2$}

Observe that the proofs of \Cref{connected1,minimumDegree1,3-path lemma1,2-path lemma1,tree lemma1,111-5,3-011-5,tree lemma1bis,weird cases lemma} only rely on the facts that we have a minimal counter-example, two more colors than the maximum degree, and that $\Delta$ was large enough $(\Delta(G_1)\geq 6)$. All of these still hold for $G_2$ $(\Delta(G_2)\geq 10)$. Thus, we also have the following.

\begin{lemma}\label{connected2}
Graph $G_2$ is connected.
\end{lemma}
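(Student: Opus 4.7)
The plan is to mirror the one-line argument used for \Cref{connected1} in the previous section. Suppose for contradiction that $G_2$ is disconnected. Since $G_2$ is a counterexample to \Cref{main theorem2}, it fails to admit a $2$-distance $(\Delta+2)$-coloring. But a $2$-distance coloring of a disconnected graph may be obtained by independently $2$-distance coloring each connected component (the colorings do not interact, as distinct components share no pair of vertices at distance at most $2$). Consequently, at least one connected component $H$ of $G_2$ must itself fail to be $2$-distance $(\Delta(H)+2)$-colorable.

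This component $H$ is a proper subgraph of $G_2$, hence has strictly fewer vertices, and satisfies $\mad(H) \le \mad(G_2) \le \tfrac{14}{5}$ since $H$ is a subgraph of $G_2$. Moreover, $H$ contains a vertex that fails to be colorable; the component in question can be chosen to be the one containing a vertex of degree $\Delta(G_2) \ge 10$, so that $\Delta(H) = \Delta \ge 10$. Thus $H$ satisfies all the hypotheses of \Cref{main theorem2} but violates its conclusion, contradicting the choice of $G_2$ as a counterexample with the fewest vertices. The only mild subtlety, should one wish to be fully pedantic, is arguing that any other component (of smaller maximum degree) can be colored independently; but this is immediate either by minimality applied to that component or by the observation that any such smaller coloring can be trivially extended using the same palette of $\Delta+2$ colors. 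No new combinatorial input is needed beyond what was used for \Cref{connected1}, so I do not anticipate any serious obstacle.
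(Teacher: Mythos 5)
Your argument is essentially the paper's: the authors dispose of \Cref{connected1} (and hence \Cref{connected2}, which inherits the same proof) with the single sentence that a component would otherwise be a smaller counterexample, and your expanded version follows that exact route. The one subtlety you flag --- a failing component whose maximum degree drops below $10$ would fall outside the hypotheses of \Cref{main theorem2}, so ``minimality applied to that component'' does not literally apply to it --- is glossed over by the paper as well, so your proof matches the paper's in both substance and level of rigor.
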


\begin{lemma}\label{minimumDegree2}
The minimum degree of $G_2$ is at least 2.
\end{lemma}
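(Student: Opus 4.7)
The plan is to repeat verbatim the argument that established \Cref{minimumDegree1}, now applied to $G_2$. First, I would invoke \Cref{connected2} to rule out isolated vertices, reducing to the case of a vertex of degree exactly $1$: suppose for contradiction that $G_2$ contains a $1$-vertex $v$ with unique neighbor $u$.

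The next step is the standard minimal-counterexample reduction. By minimality of $G_2$, the graph $G_2 - v$ admits a $2$-distance $(\Delta+2)$-coloring. To extend this coloring to $v$, I bound the $2$-distance neighborhood of $v$ in $G_2$: it consists of $u$ together with the at most $d(u)-1 \le \Delta-1$ other neighbors of $u$, so $|N^*_{G_2}(v)| \le \Delta$. Since we have $\Delta+2$ colors available, at least two colors are free for $v$, and assigning any such color yields a valid $2$-distance $(\Delta+2)$-coloring of $G_2$, contradicting the choice of $G_2$ as a counterexample.

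I do not anticipate any obstacle here. As the paragraph preceding the lemma already observes, the proof of \Cref{minimumDegree1} uses only (i) that $G_1$ is a minimal counterexample with a budget of $\Delta+2$ colors, and (ii) the trivial bound $\Delta \ge 1$; both remain valid for $G_2$ (in fact with the stronger hypothesis $\Delta \ge 10$). The new sparsity threshold $\mad(G_2) < \tfrac{14}{5}$ is not needed at this stage, and no further structural property of $G_2$ has to be invoked, so the argument transfers without any modification.
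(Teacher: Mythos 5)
Your proposal is correct and is essentially identical to the paper's argument: the paper simply notes that the proof of \Cref{minimumDegree1} (remove the $1$-vertex, color $G_2-v$ by minimality, and recolor $v$ against at most $\Delta$ constraints with $\Delta+2$ colors) carries over verbatim to $G_2$. Your bound $|N^*_{G_2}(v)|\le 1+(\Delta-1)=\Delta$ is exactly the count the paper relies on.
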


\begin{lemma} \label{3-path lemma2}
Graph $G_2$ has no $3^+$-paths.
\end{lemma}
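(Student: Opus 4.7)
The plan is to follow the argument of \Cref{3-path lemma1} essentially verbatim, as the excerpt already observes that the proof of that lemma used only minimality of the counterexample, the availability of $\Delta+2$ colours, and $\Delta$ being sufficiently large --- all of which still hold here, with the stronger bound $\Delta(G_2)\geq 10$.

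Concretely, I will argue by contradiction: suppose $G_2$ contains a $3^+$-path $v_0 v_1 v_2 v_3 \cdots v_k$ with $k\geq 4$, so that $v_1,v_2,v_3$ are three consecutive internal $2$-vertices. I form $H=G_2-\{v_1,v_2,v_3\}$, obtain a $2$-distance $(\Delta+2)$-colouring of $H$ by minimality of $G_2$, and then extend this colouring by colouring $v_1$, then $v_3$, then $v_2$ in that order.

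The only thing to verify is that each of these three vertices has at least one available colour at the moment it is coloured. For $v_1$ the colored $2$-distance neighbourhood is contained in $\{v_0\}\cup(N(v_0)\setminus\{v_1\})$, of size at most $\Delta$, leaving at least $2$ colours. For $v_3$ it is contained in $\{v_1,v_4\}\cup(N(v_4)\setminus\{v_3\})$, contributing at most $\Delta+1$ forbidden colours and leaving at least $1$. For $v_2$ it is just $\{v_0,v_1,v_3,v_4\}$, so at least $\Delta-2\geq 8$ colours remain. Each of these bounds is in fact looser than in the proof of \Cref{3-path lemma1}, since $\Delta\geq 10$.

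There is no real obstacle here: the structural configuration and colouring order are identical to those already handled for $G_1$, and the stronger lower bound on $\Delta$ only makes the colour accounting more comfortable. The extended colouring is a valid $2$-distance $(\Delta+2)$-colouring of $G_2$, contradicting the choice of $G_2$ as a minimal counterexample.
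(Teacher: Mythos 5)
Your proposal is correct and matches the paper's approach exactly: the paper proves this lemma for $G_2$ simply by noting that the proof of \Cref{3-path lemma1} carries over verbatim, which is precisely the argument (delete $v_1,v_2,v_3$, color by minimality, extend in the order $v_1,v_3,v_2$) that you reproduce with correct color counts.
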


\begin{lemma}\label{2-path lemma2}
A $2$-path has two distinct endvertices and both have degree $\Delta$.
\end{lemma}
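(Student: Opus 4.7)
The plan is to port the proof of \Cref{2-path lemma1} almost verbatim, since that argument relies only on the minimality of the counterexample, the palette size $\Delta+2$, and the fact that $\Delta$ is ``large enough'' (for \Cref{2-path lemma1} this meant $\Delta\geq 6$). Since $\Delta(G_2)\geq 10\geq 6$, nothing needs adjustment; indeed the authors have already signalled this in the paragraph preceding the statement.

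Concretely, I would suppose for contradiction that $G_2$ contains a 2-path $v_0v_1v_2v_3$ and split into two cases. In the degenerate case $v_0=v_3$, the 2-path folds into a triangle; I delete $v_1$ and $v_2$, color the remainder by minimality, and observe that each of $v_1,v_2$ sees only $v_0$ together with the $d(v_0)-2$ other neighbors of $v_0$, giving at most $\Delta-1$ colored constraints and hence at least $3$ available colors. A greedy two-step extension colors $v_1$ and then $v_2$, contradicting minimality.

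In the generic case $v_0\neq v_3$, I assume for contradiction that, WLOG, $d(v_3)\leq \Delta-1$ and color $G_2-\{v_1,v_2\}$ by minimality. The vertex $v_1$ sees at most $\Delta+1$ already-colored vertices (namely $v_0$, $v_3$, and the at most $\Delta-1$ other neighbors of $v_0$), leaving at least one color; once $v_1$ is chosen, $v_2$ is constrained by $v_1$, $v_0$, $v_3$, and the at most $\Delta-2$ other neighbors of $v_3$, again at most $\Delta+1$ colors, leaving another color. Coloring $v_1$ then $v_2$ contradicts minimality, so $d(v_3)=\Delta$, and by symmetry $d(v_0)=\Delta$ as well.

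There is no conceptual obstacle here; the only sanity check is that the degree bound $\Delta\geq 10$ is at least as permissive as the $\Delta\geq 6$ used before, which it plainly is. I would expect the published proof to amount to little more than a sentence pointing out that the argument of \Cref{2-path lemma1} goes through unchanged.
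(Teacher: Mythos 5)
Your proposal is correct and matches the paper exactly: the published ``proof'' of \Cref{2-path lemma2} is just the observation, stated once for all the ported lemmas, that the argument for \Cref{2-path lemma1} uses only minimality, the palette size $\Delta+2$, and $\Delta$ being large enough, all of which hold for $G_2$. Your explicit re-derivation of the color counts in both cases is consistent with the original proof.
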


\begin{lemma} \label{tree lemma2}
Graph $G_2$ has no cycles consisting of $2$-paths.
\end{lemma}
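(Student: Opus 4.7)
The plan is to reuse verbatim the argument from \Cref{tree lemma1}, since the author's preceding paragraph has already observed that that proof only depends on three ingredients: minimality of the counterexample, the availability of $\Delta+2$ colors, and $\Delta$ being at least $6$. All three hold for $G_2$ (where in fact $\Delta\geq 10$), so the same scheme will go through without modification.

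Concretely, I would suppose for contradiction that $G_2$ contains a cycle $C$ formed by $k$ consecutive $2$-paths, labelled as in \Cref{tree lemma figure}: the branch vertices $v_0,v_3,v_6,\dots,v_{3(k-1)}$ are the endvertices of the $2$-paths, and by \Cref{2-path lemma2} each of them is a $\Delta$-vertex. The remaining $2k$ vertices $v_{3i+1},v_{3i+2}$ (for $0\le i\le k-1$) are $2$-vertices lying in the interiors of the $2$-paths. I would then remove these $2k$ interior vertices and invoke the minimality of $G_2$ to obtain a $2$-distance $(\Delta+2)$-coloring of the resulting graph.

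The extension step is where one must be careful, but it is the same count as in the original lemma. Each removed $2$-vertex $v_{3i+1}$ sees at distance at most $2$ only: its neighbor $v_{3i+2}$ (still uncolored), the $\Delta$-vertex $v_{3i}$, the neighbors of $v_{3i}$ (there are $\Delta-1$ of them besides $v_{3i+1}$), and its own second neighbour $v_{3i+2}$'s other neighbour $v_{3(i+1)}$. Since $d(v_{3i})=\Delta$, this accounts for $\Delta$ colored $2$-distance neighbours, so at least $(\Delta+2)-\Delta=2$ colors remain available for $v_{3i+1}$; the same bound holds symmetrically for $v_{3i+2}$. Thus the subgraph induced by the uncolored vertices is an even cycle of length $2k$ in which every vertex has a list of at least $2$ available colors, and I would conclude by the standard fact that even cycles are $2$-choosable, yielding a valid $2$-distance $(\Delta+2)$-coloring of $G_2$ and contradicting its status as a counterexample.

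There is no real obstacle to foresee: the delicate inequality ``$\Delta$ big enough to make the leftover list of size $\geq 2$'' is automatic here because we subtract exactly $\Delta$ from $\Delta+2$, independently of whether $\Delta=6$ or $\Delta=10$. The only mild point of attention is the (harmless) degenerate case $k=1$, where the ``cycle of $2$-paths'' reduces to a multigraph configuration already excluded by \Cref{2-path lemma2}; in all nontrivial cases, the even-cycle $2$-choosability argument applies identically to the one in \Cref{tree lemma1}.
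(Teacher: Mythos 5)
Your proposal is correct and takes essentially the same approach as the paper: the paper proves this lemma simply by observing that the argument for \Cref{tree lemma1} (delete the $2k$ interior $2$-vertices, color the rest by minimality, note each deleted vertex retains at least $2$ admissible colors because its endvertex has degree $\Delta$, and finish by $2$-choosability of the even cycle of length $2k$) depends only on minimality, having $\Delta+2$ colors, and $\Delta$ being large enough, all of which hold for $G_2$. Your detailed re-derivation of the list sizes and the even-cycle structure is exactly that argument spelled out.
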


\begin{lemma} \label{111-10}
Consider a $(1,1,1)$-vertex $u$. The other endvertices of the $1$-paths incident to $u$ are all distincts and are $\Delta$-vertices.
\end{lemma}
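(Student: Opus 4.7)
The plan is to mirror the proof of \Cref{111-5} essentially verbatim, since the author has just observed that the argument of \Cref{111-5} relies only on (i) minimality of the counterexample, (ii) having $\Delta+2$ colors at our disposal, and (iii) the bound $\Delta\geq 6$. All three ingredients remain valid for $G_2$ with room to spare, since $\Delta(G_2)\geq 10$.

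Concretely, I would argue by contradiction. Suppose $u$ is a $(1,1,1)$-vertex of $G_2$ with $2$-neighbors $u_1,u_2,u_3$, and let $v_i\neq u$ denote the other endvertex of the $1$-path $uu_iv_i$, for $i\in\{1,2,3\}$. The proof then splits into two steps, exactly as in \Cref{111-5}.

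In the first step, handle the coincidence case: up to relabelling assume $v_1=v_2$ (possibly also $=v_3$). Delete the set $H=\{u,u_1,u_2,u_3\}$, apply minimality of $G_2$ to $2$-distance color the remainder, and then greedily recolor $u_3$, $u_1$, $u_2$, $u$ in that order. A direct count of the $2$-distance neighborhood of each vertex shows that each has enough available colors for the extension: the coincidence $v_1=v_2$ collapses two distance-$2$ constraints into one around $u_1$ and $u_2$, and the last vertex $u$ enjoys at least $\Delta\geq 10$ free colors. This contradicts the minimality of $G_2$, so $v_1,v_2,v_3$ must be pairwise distinct. In the second step, suppose without loss of generality that $d(v_1)\leq \Delta-1$; delete $H$ again, color the rest by minimality, and extend greedily in the order $u_3$, $u_2$, $u_1$, $u$. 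The degree deficiency at $v_1$ frees an additional color at $u_1$, enough for the greedy extension to succeed, which again yields a contradiction and forces $d(v_1)=d(v_2)=d(v_3)=\Delta$.

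The only real obstacle is the bookkeeping of forbidden colors at each step of the greedy extension, but since $\Delta(G_2)\geq 10$ is strictly larger than the $\Delta\geq 6$ threshold used for $G_1$, there is extra slack in every inequality and no new case analysis is required. Thus the statement follows by a word-for-word transcription of the proof of \Cref{111-5}.
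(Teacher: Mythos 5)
Your proposal matches the paper exactly: the paper gives no new argument for this lemma, observing instead that the proof of \Cref{111-5} depends only on minimality, the availability of $\Delta+2$ colors, and $\Delta$ being large enough, all of which hold for $G_2$ with $\Delta\geq 10$. Your step-by-step transcription (splitting into the coincidence case $v_1=v_2$ and the degree-deficiency case $d(v_1)\leq\Delta-1$, with the same deletion set and the same greedy coloring orders) is correct and is precisely the intended justification.
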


\begin{lemma} \label{tree lemma2bis}
Graph $G_2$ has no cycles consisting of $(1,1,1)$-paths.
\end{lemma}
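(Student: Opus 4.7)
The plan is to mimic exactly the proof of \Cref{tree lemma1bis}, substituting \Cref{111-10} for \Cref{111-5}; indeed, since $\Delta(G_2) \geq 10 > 6$, we have only more slack than in the original argument, not less.

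Suppose for contradiction that $G_2$ contains a cycle formed by $k$ consecutive $(1,1,1)$-paths. Label the vertices cyclically as $v_0, v_1, \ldots, v_{4k-1}$, where the $\Delta$-endvertices are $v_{4i}$, the $(1,1,1)$-vertices are $v_{4i+2}$, and the $2$-vertices on the cycle are $v_{4i+1}$ and $v_{4i+3}$ for $0 \leq i \leq k-1$ (indices taken mod $4k$). Each $(1,1,1)$-vertex $v_{4i+2}$ has a third $2$-neighbor off the cycle, whose other endvertex is a $\Delta$-vertex by \Cref{111-10}. I would delete from $G_2$ the vertices $\{v_{4i+1}, v_{4i+2}, v_{4i+3} : 0 \leq i \leq k-1\}$ and, by minimality of $G_2$, fix a $2$-distance $(\Delta+2)$-coloring of the remaining graph.

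Next, I would extend the coloring in two stages. First, I color the $2k$ cycle-vertices $v_1, v_3, \ldots, v_{4k-1}$. Each such $v_{2j+1}$ is adjacent to a $\Delta$-vertex (by \Cref{2-path lemma2} applied via \Cref{111-10}, every $v_{4i}$ has degree $\Delta$) and to a $(1,1,1)$-vertex $v_{4i+2}$ which is currently uncolored; thus $v_{2j+1}$ has at most $\Delta$ forbidden colors from its colored $2$-distance neighbors, leaving a list of at least $2$ available colors. These uncolored $2$-vertices form an even cycle of length $2k$ in the ``conflict'' structure (each $v_{2j+1}$ conflicts with $v_{2j-1}$ and $v_{2j+3}$ because they share a common neighbor $v_{4i}$ at distance $2$, and with nothing else uncolored among the $v_{\text{odd}}$'s), so $2$-choosability of even cycles furnishes a proper extension. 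Second, I color the $k$ vertices $v_2, v_6, \ldots, v_{4k-2}$ greedily: each $(1,1,1)$-vertex $v_{4i+2}$ has exactly three neighbors (two of which are on the cycle, already colored in the previous step, and one off-cycle, colored in the initial step) and three distance-$2$ vertices (the $\Delta$-endvertices), for at most $6$ forbidden colors, leaving at least $\Delta + 2 - 6 \geq 6$ available colors.

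This produces a valid $2$-distance $(\Delta+2)$-coloring of $G_2$, contradicting its status as a counterexample. The only routine verification is that the $2k$ uncolored cycle-$2$-vertices really do form an even cycle in the conflict graph and that no unwanted identifications occur (e.g.\ that all the $v_{4i}$ are distinct, equivalently that the cycle has no chords among $\Delta$-endvertices forcing $v_{4i+1}$ and $v_{4i+3}$ to share an extra colored neighbor); but any such identification only increases the number of available colors at the $v_{\text{odd}}$'s, so it does not interfere with the $2$-choosability step. I anticipate no real obstacle; the only subtlety worth double-checking is the accounting of forbidden colors at a $(1,1,1)$-vertex in the second stage, which remains well within the budget of $\Delta + 2 \geq 12$ colors available to $G_2$.
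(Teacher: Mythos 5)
Your proposal is correct and takes essentially the same route as the paper, which simply observes that the proof of the corresponding lemma for $G_1$ (delete the interior vertices of the cycle of $(1,1,1)$-paths, recolor the odd-indexed $2$-vertices via $2$-choosability of even cycles using that the endvertices have degree $\Delta$, then greedily color the $(1,1,1)$-vertices with at most six forbidden colors) carries over verbatim to $G_2$ since it only uses minimality, the budget of $\Delta+2$ colors, and $\Delta$ being large enough. The only cosmetic slip is that consecutive odd-indexed $2$-vertices alternately share a $\Delta$-endvertex $v_{4i}$ or a $(1,1,1)$-vertex $v_{4i+2}$ as their common neighbour, not always a $v_{4i}$, but this changes nothing.
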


\begin{lemma} \label{8-011-10}
A $(1,1,0)$-vertex with a $(\db{3}{\Delta-3})$-neighbor shares its $2$-neighbors with $\Delta$-vertices.
\end{lemma}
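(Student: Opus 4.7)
The plan is to adapt the proof of \Cref{3-011-5} essentially verbatim: the statement is identical, and nothing in the original argument required $\Delta=6$ rather than $\Delta\geq 6$, so the proof goes through unchanged once we verify the degree slack remains sufficient for $\Delta\geq 10$. Concretely, I would suppose for contradiction that $u$ is a $(1,1,0)$-vertex with a $(\db{3}{\Delta-3})$-neighbor $t$, whose two $2$-neighbors $u_1,u_2$ have other endvertices $v,w$, and w.l.o.g.\ $d(v)\leq\Delta-1$ (the case $d(w)\leq\Delta-1$ is symmetric).

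By minimality of $G_2$, I would $2$-distance color $G_2-\{u,u_1,u_2\}$ and extend by coloring $u_2$, then $u_1$, then $u$. When coloring $u_2$, the colored $2$-distance neighbors are $w$, $t$, and the $d(w)-1$ other neighbors of $w$, so the number of forbidden colors is at most $d(w)+1\leq\Delta+1$. When coloring $u_1$, the colored $2$-distance neighbors are $v$, the just-colored $u_2$, $t$, and the $d(v)-1$ other neighbors of $v$, giving at most $d(v)+2\leq\Delta+1$ forbidden colors, where the strict inequality $d(v)\leq\Delta-1$ is essential. Finally, when coloring $u$, the colored $2$-distance neighbors are $u_1,u_2,t,v,w$ together with the $d(t)-1$ other neighbors of $t$, yielding at most $d(t)+4\leq(\Delta-3)+4=\Delta+1$ forbidden colors.

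In each of the three steps we thus have at least one color left out of the $\Delta+2$-color palette, so the coloring extends to $G_2$, contradicting its minimality. No step is truly delicate: the upper bounds are simple cardinality estimates on sets of colored $2$-distance neighbors, and $\Delta\geq 10$ only makes the inequalities more comfortable than the $\Delta\geq 6$ bound used in \Cref{3-011-5}. The symmetric conclusion $d(v)=d(w)=\Delta$ then gives the lemma.
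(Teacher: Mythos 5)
Your proposal is correct and takes essentially the same route as the paper: the paper proves this lemma for $G_2$ simply by observing that the proof of \Cref{3-011-5} (color $G-\{u,u_1,u_2\}$ by minimality, then extend to $u_2$, $u_1$, $u$ in that order) uses only minimality, the $\Delta+2$ palette, and $\Delta$ large enough, all of which persist for $\Delta\geq 10$. Your explicit counts of forbidden colors at each step match the paper's bounds.
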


\begin{lemma}\label{weird cases lemma2}
A $\Delta$-vertex $u$ cannot be incident to a $2$-path, a $(1,1,1)$-path, and $\Delta-2$ other $1^+$-paths $uu_iv_i$ ($1\leq i\leq \Delta-2$) where each $v_i$ is a $3^-$-vertex.
\end{lemma}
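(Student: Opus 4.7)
The plan is to transfer the proof of \Cref{weird cases lemma} (its $G_1$ analogue) essentially verbatim. That argument uses only three ingredients: (i) minimality of the counterexample, (ii) $\Delta+2$ available colors, and (iii) a lower bound on $\Delta$ (there it was $\Delta\geq 6$). All three still hold here since $\Delta(G_2)\geq 10\geq 6$, so the same scheme applies, with strictly more slack than before.

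Concretely, I would argue by contradiction. Name the distinguished $1$-path $uu_{\Delta-1}v_{\Delta-1}$, where $v_{\Delta-1}$ is the $(1,1,1)$-vertex; the $2$-path $uu_\Delta u'_\Delta v_\Delta$ incident to $u$; and the remaining $\Delta-2$ paths $uu_iv_i$ with $d(v_i)\leq 3$. By \Cref{111-10} and \Cref{2-path lemma2}, both $v_{\Delta-1}$ and $v_\Delta$ are $\Delta$-vertices and are disjoint from the $v_i$'s. Set $H=\{u\}\cup N(u)\cup\{u'_\Delta\}$, color $G_2-H$ by minimality, and then \emph{uncolor} $v_{\Delta-1}$ so that $u_{\Delta-1}$ gains one extra available color. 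A direct count of the remaining colored $2$-distance neighbors of each vertex of $H\cup\{v_{\Delta-1}\}$ yields the list sizes $|L(u)|\geq 4$, $|L(u'_\Delta)|\geq 2$, $|L(v_{\Delta-1})|\geq \Delta-2\geq 8$, $|L(u_i)|\geq \Delta-1$ for $1\leq i\leq \Delta-2$, $|L(u_{\Delta-1})|\geq \Delta$, and $|L(u_\Delta)|\geq \Delta+1$.

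The extension is then built in a fixed order: trim $L(u'_\Delta)$ to exactly $2$ colors, color $u$ with a color outside this trimmed list (possible because $|L(u)|\geq 4>2$), then color $u_1,\ldots,u_\Delta$, next $v_{\Delta-1}$, and finally $u'_\Delta$. At each step one checks that the list size strictly exceeds the number of previously colored $2$-distance neighbors. The only potentially delicate subcase, a coincidence $v_i=v_j$ for $1\leq i<j\leq \Delta-2$, is absorbed by the fact that the corresponding bounds improve to $|L(u_i)|,|L(u_j)|\geq \Delta$, which more than compensates for the extra distance-$2$ constraint passing through $v_i$. I do not foresee any genuine obstacle: the proof is pure bookkeeping, and with $\Delta\geq 10$ every inequality has strictly more room than in the $G_1$ case, so no additional structural ingredient is required.
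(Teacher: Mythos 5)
Your proposal is correct and matches the paper exactly: the paper itself proves this lemma by observing that the proof of its $G_1$ analogue only uses minimality, the availability of $\Delta+2$ colors, and a sufficiently large $\Delta$, all of which persist for $G_2$ with $\Delta\geq 10$. Your list sizes, coloring order, and treatment of the coincidence $v_i=v_j$ reproduce the original argument verbatim, so there is nothing to add.
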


We will show some more reducible configurations.

\begin{lemma} \label{4-00(-4)1-10}
A $(1,0,0)$-vertex with two $(\db34)$-neighbors shares its $2$-neighbor with a $\Delta$-vertex.
\end{lemma}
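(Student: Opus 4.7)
The plan is to assume toward contradiction that the other endvertex $v$ of the $1$-path $uu_1v$ incident to the $(1,0,0)$-vertex $u$ satisfies $d(v)\leq \Delta-1$; here $u_1$ denotes $u$'s $2$-neighbor on that path and $a,b$ denote the two $(\db34)$-neighbors of $u$. By the minimality of $G_2$, the subgraph $G_2-\{u,u_1\}$ admits a $2$-distance $(\Delta+2)$-coloring, which I aim to extend by coloring $u_1$ first and $u$ second, thereby contradicting the minimality.

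For $u_1$, the already-colored vertices in $N^*(u_1)$ consist of $v$, $a$, $b$ together with the at most $d(v)-1$ other neighbors of $v$. This forbids at most $d(v)+2\leq \Delta+1$ colors, leaving at least one available. Once $u_1$ is colored, the colored vertices in $N^*(u)$ are $u_1,a,b,v$ together with the remaining neighbors of $a$ and $b$; since $d(a),d(b)\leq 4$, this amounts to at most $4+3+3=10$ forbidden colors, so at least $\Delta+2-10\geq 2$ colors remain available for $u$ (using $\Delta\geq 10$). The two greedy extensions then produce a valid $2$-distance $(\Delta+2)$-coloring of $G_2$, which is the desired contradiction.

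I expect no genuine obstacle: the argument is a clean greedy extension that does not need to invoke any of the previously established reducible configurations. The only tight spot is the count for $u_1$, which consumes essentially the entire color budget precisely when $d(v)=\Delta-1$, confirming that the hypothesis $d(v)\leq \Delta-1$ is exactly the threshold the discharging scheme will eventually require. Any coincidences among the enumerated vertices (for instance $v$ equal to $a$ or $b$, or $a$ and $b$ sharing a neighbor) can only shrink the forbidden-color sets and hence make the extension easier.
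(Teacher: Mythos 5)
Your proof is correct and follows essentially the same route as the paper: assume $d(v)\leq\Delta-1$, remove the $2$-vertex (the paper deletes only it and uncolors $u$, which is equivalent to your deletion of both), color the $2$-vertex first using the bound $d(v)+2\leq\Delta+1$ on its forbidden colors, and finish with $u$ using $d^*(u)\leq 4+3+3=10<\Delta+2$. The counts match the paper's exactly, so there is nothing to add.
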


\begin{proof}
Suppose by contradiction that there exists a $(1,0,0)$-vertex $u$ with two $(\db34)$-neighbors $u_1$, $u_2$, and let $uvw$ be the $1$-path incident to $u$, where $d(w)\leq \Delta-1$. We color $G_2-\{v\}$ by minimality of $G_2$, then we uncolor $u$. Since we have $\Delta+2\geq 12$ colors and $d^*(u)=d(u_1)+d(u_2)+2\leq 4+4+2=10$, we can always color $u$ last. Finally, $v$ has at least one available color. Thus, we obtain a valid coloring of $G_2$, which is a contradiction. 
\end{proof}

\begin{figure}[H]
\centering
\begin{tikzpicture}[scale=0.6]{thick}
\begin{scope}[every node/.style={circle,draw,minimum size=1pt,inner sep=2}]
	\node[ellipse,label={above:$u_1$}] (0) at (-1.5,0) {$\db34$};
    \node[fill,label={[label distance=+4pt]above left:$u$},label={below:$2$}] (2) at (2,0) {};
    \node[fill,label={above:$v$},label={below:$1$}] (3) at (4,0) {};
    \node[ellipse,label={above:$w$}] (4) at (7.5,0) {$(\Delta-1)^-$};
    
    \node[ellipse,label={above:$u_2$}] (2') at (2,2.5) {$\db34$};
\end{scope}

\begin{scope}[every edge/.style={draw=black}]
    \path (0) edge (4);
    \path (2) edge (2');
\end{scope}
\end{tikzpicture}
\caption{A $(1,0,0)$-vertex with two $(\db34)$-neighbor that shares a $2$-neighbor with a $(\Delta-1)^-$-vertex.}
\end{figure}

\begin{lemma} \label{1111}
Consider the four other endvertices of the $1$-paths incident to a $(1,1,1,1)$-vertex. At most one of them is a $(\Delta-2)^-$-vertex.
\end{lemma}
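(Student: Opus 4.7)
The plan is to mimic the deletion-and-extend template used in the earlier reducibility lemmas. Let $u$ be a $(1,1,1,1)$-vertex with the four incident $1$-paths $uu_iv_i$ for $i=1,2,3,4$, and suppose for contradiction that at least two of the $v_i$'s are $(\Delta-2)^-$-vertices; rename them so that $d(v_1), d(v_2) \leq \Delta-2$, and observe that the worst case is $d(v_3)=d(v_4)=\Delta$. I would delete $H=\{u,u_1,u_2,u_3,u_4\}$ from $G_2$, use the minimality of $G_2$ to $2$-distance color $G_2-H$ with $\Delta+2$ colors, and try to extend the coloring back to the five vertices of $H$.

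First I would compute the list $L(x)$ of remaining colors for each $x\in H$ after $G_2-H$ has been colored. The colored $2$-distance neighbors of $u$ are at most the four (possibly coincident) vertices $v_1,v_2,v_3,v_4$, so $|L(u)| \geq \Delta-2$. For each $u_i$, the colored $2$-distance neighbors are contained in $\{v_i\} \cup (N(v_i)\setminus \{u_i\})$, so $|L(u_i)| \geq \Delta+2-d(v_i)$; hence $|L(u_1)|,|L(u_2)|\geq 4$ and $|L(u_3)|,|L(u_4)|\geq 2$. Note also that the four $u_i$'s are pairwise at distance $2$ from one another through $u$, so each coloring step at a $u_i$ forbids one color at each of the others and at $u$.

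The key step is to choose the extension order well. I would extend in the order $u_3, u_4, u_1, u_2, u$: $u_3$ takes one of its $\geq 2$ colors; $u_4$ still has $\geq 2-1=1$ color once the color of $u_3$ is forbidden; $u_1$ has $\geq 4-2=2$ colors after forbidding the colors of $u_3$ and $u_4$; $u_2$ has $\geq 4-3=1$; and finally $u$ has $\geq (\Delta-2)-4 = \Delta-6 \geq 4$ colors after all four $u_i$'s are colored, using $\Delta\geq 10$. This produces a valid $2$-distance $(\Delta+2)$-coloring of $G_2$, contradicting the minimality of $G_2$.

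The main (mild) obstacle is keeping the bookkeeping honest under coincidences among the $v_i$'s: if $v_i=v_j$ or if there are edges between them, some entries in the $2$-distance neighborhoods listed above collapse, but this only weakly increases each corresponding $|L(x)|$, so the estimates above really are the worst case. No deeper difficulty is expected; the hypothesis $\Delta\geq 10$ was chosen precisely so that $u$ retains the $4$-color slack required after the four $u_i$'s are colored.
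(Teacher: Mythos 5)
Your proposal is correct and follows essentially the same argument as the paper: delete $\{u,u_1,u_2,u_3,u_4\}$, color the rest by minimality, and extend in the order $u_3,u_4,u_1,u_2,u$ using the list-size bounds $2,2,4,4,\Delta-2\geq 8$. The only difference is that you spell out the bookkeeping (including coincidences among the $v_i$'s) in more detail than the paper does.
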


\begin{proof}
Suppose by contradiction that we have a $(1,1,1,1)$-vertex $u$ incident to four $1$-paths $uu_iv_i$ for $1\leq i\leq 4$, where $v_1$ and $v_2$ are $(\Delta-2)^-$-vertices. We color $G_2-\{u,u_1,u_2,u_3,u_4\}$ by minimality of $G_2$. Then, it suffices to color $u_3$, $u_4$, $u_1$, $u_2$, and $u$ in this order, which is possible since they have at least respectively $2$, $2$, $4$, $4$, and $8$ available colors as we have $\Delta+2$ colors and $\Delta\geq 10$.
\end{proof}

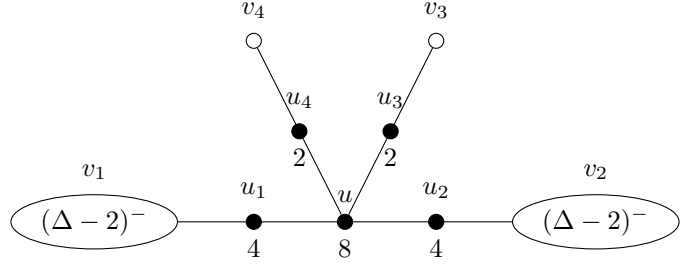
\begin{figure}[H]
\centering
\begin{tikzpicture}[scale=0.6]{thick}
\begin{scope}[every node/.style={circle,draw,minimum size=1pt,inner sep=2}]
	\node[ellipse,label={above:$v_1$}] (0) at (-3.5,0) {$(\Delta-2)^-$};
    \node[fill,label={above:$u_1$},label={below:$4$}] (1) at (0,0) {};
    \node[fill,label={above:$u$},label={below:$8$}] (2) at (2,0) {};
    \node[fill,label={above:$u_2$},label={below:$4$}] (3) at (4,0) {};
    \node[ellipse,label={above:$v_2$}] (4) at (7.5,0) {$(\Delta-2)^-$};
    
    \node[fill,label={above:$u_3$},label={below:$2$}] (2') at (3,2) {};
    \node[label={above:$v_3$}] (2'') at (4,4) {};
    
    \node[fill,label={above:$u_4$},label={below:$2$}] (3') at (1,2) {};
    \node[label={above:$v_4$}] (3'') at (0,4) {};
\end{scope}

\begin{scope}[every edge/.style={draw=black}]
    \path (0) edge (4);
    \path (2) edge (2'');
    \path (2) edge (3'');
\end{scope}
\end{tikzpicture}
\caption{A $(1,1,1,1)$-vertex that sees two $(\Delta-2)^-$-vertex at distance $2$.}
\end{figure}

\subsection{Discharging rules}

Since we have $\mad(G_2)<\frac{14}5$, we must have 
\begin{equation}\label{equation2}
\sum_{v\in V(G_2)} \left(5d(v)-14\right) < 0
\end{equation}

We assign to each vertex $v$ the charge $\mu(v)=5d(v)-14$. To prove the non-existence of $G_2$, we will redistribute the charges preserving their sum and obtaining a positive total charge, which will contradict \Cref{equation2}.

Observe that \Cref{2-path sponsor definition,111-path sponsor definition} also hold for $G_2$ thanks to \Cref{tree lemma2,tree lemma2bis}.

We apply the following discharging rules:

\medskip

\begin{itemize}
\item[\ru0] (see \Cref{r0 figure2}): Every $3^+$-vertex gives 2 to each 2-neighbor on an incident $1$-path.

\item[\ru1] (see \Cref{r1 figure2}): Let $u$ be incident to a $2$-path $P=uu_1u_2v$.
\begin{itemize}
\item[(i)] If $u$ is not $P$'s sponsor, then $u$ gives $\frac72$ to $u_1$.
\item[(ii)] If $u$ is $P$'s sponsor, then $u$ gives 4 to $u_1$ and $\frac12$ to $u_2$.
\end{itemize}

\item[\ru2] (see \Cref{r2 figure2}): 
\begin{itemize}
\item[(i)] Every $(\db{5}{7})$-vertex gives $1$ to each $3$-neighbor.
\item[(ii)] Every $8^+$-vertex gives $3$ to each $3$-neighbor.
\end{itemize}

\item[\ru3] (see \Cref{r3 figure2}): Let $uvw$ be a $1$-path.
\begin{itemize}
\item[(i)] If $u$ is a $\Delta$-vertex, $w$ is a $(1,1,1)$-vertex, and $u$ is $w$'s sponsor, then $u$ gives $2$ to $w$.
\item[(ii)] If $u$ is a $\Delta$-vertex, $w$ is a $(1,1,1)$-vertex, and $u$ is not $w$'s sponsor, then $u$ gives 1 to $w$.
\item[(iii)] If $u$ is a $\Delta$-vertex and $w$ is a $(1,1^-,0)$-vertex, then $u$ gives $\frac32$ to $w$.
\item[(iv)] If $u$ is a $9^+$-vertex and $w$ is a $4$-vertex, then $u$ gives $\frac23$ to $w$.
\end{itemize}
\end{itemize}

\begin{figure}[H]
\begin{center}
\begin{minipage}[b]{0.2\textwidth}
\begin{center}
\begin{tikzpicture}[baseline=(0.north)]
\begin{scope}[every node/.style={circle,thick,draw,minimum size=1pt,inner sep=2,font=\small}]
    \node[minimum width=23pt] (0) at (1,2) {$3^+$};
    \node[minimum width=23pt] (100) at (-1,2) {$3^+$};
    \node[fill] (3) at (0,2) {};
\end{scope}

\begin{scope}[every edge/.style={draw=black,thick}]
    \path (0) edge (100);
      \path[->] (0) edge[bend right] node[above] {2} (3);
      \path[->] (100) edge[bend left] node[above] {2} (3);
\end{scope}
\end{tikzpicture}
\caption{\ru0.}
\label{r0 figure2}
\end{center}
\end{minipage}
\begin{minipage}[b]{0.79\textwidth}
\centering
\begin{subfigure}[b]{0.39\textwidth}
\begin{center}
\begin{tikzpicture}[baseline=(3.north)]
\begin{scope}[every node/.style={circle,thick,draw,minimum size=1pt,inner sep=2,font=\small}]
    \node[minimum width=23pt,label={[label distance=-16pt]below:non-sponsor}] (0) at (-3,0) {$\Delta$};
    \node[fill] (2) at (-2,0) {};
    \node[fill] (3) at (-1,0) {};
    \node[minimum width=23pt,label={[label distance=-8pt]below:sponsor}] (4) at (0,0) {$\Delta$};
\end{scope}

\begin{scope}[every edge/.style={draw=black,thick}]
    \path (0) edge (4);    
    \path[->] (0) edge[bend left] node[above] {$\frac72$} (2);
\end{scope}
\end{tikzpicture}
\vspace*{-0.9cm}
\caption{\ }
\end{center}
\end{subfigure}
\begin{subfigure}[b]{0.39\textwidth}
\begin{center}
\begin{tikzpicture}[baseline=(3.north)]
\begin{scope}[every node/.style={circle,thick,draw,minimum size=1pt,inner sep=2,font=\small}]
    \node[minimum width=23pt,label={[label distance=-16pt]below:non-sponsor}] (0) at (-3,0) {$\Delta$};
    \node[fill] (2) at (-2,0) {};
    \node[fill] (3) at (-1,0) {};
    \node[minimum width=23pt,label={[label distance=-8pt]below:sponsor}] (4) at (0,0) {$\Delta$};
\end{scope}

\begin{scope}[every edge/.style={draw=black,thick}]
    \path (0) edge (4);    
    \path[->] (4) edge[bend left] node[below] {$4$} (3);
    \path[->] (4) edge[bend right] node[above] {$\frac12$} (2);
\end{scope}
\end{tikzpicture}
\vspace*{-0.9cm}
\caption{$2$-path sponsor.}
\end{center}
\end{subfigure}
\caption{\ru1.}
\label{r1 figure2}
\end{minipage}
\end{center}
\end{figure}

\begin{figure}[H]
\centering
\begin{subfigure}[b]{0.49\textwidth}
\begin{center}
\begin{tikzpicture}[baseline=(0.north)]
\begin{scope}[every node/.style={circle,thick,draw,minimum size=1pt,inner sep=2,font=\small}]
    \node[minimum width=23pt] (0) at (1,2) {$3$};
    \node[minimum width=23pt] (100) at (-0.5,2) {$\db57$};
\end{scope}

\begin{scope}[every edge/.style={draw=black,thick}]
    \path (0) edge (100);
      \path[->] (100) edge[bend left] node[above] {$1$} (0);
\end{scope}
\end{tikzpicture}
\caption{\ }
\end{center}
\end{subfigure}
\begin{subfigure}[b]{0.49\textwidth}
\begin{center}
\begin{tikzpicture}[baseline=(0.north)]
\begin{scope}[every node/.style={circle,thick,draw,minimum size=1pt,inner sep=2,font=\small}]
    \node[minimum width=23pt] (0) at (1,2) {$3$};
    \node[minimum width=23pt] (100) at (-0.5,2) {$8^+$};
\end{scope}

\begin{scope}[every edge/.style={draw=black,thick}]
    \path (0) edge (100);
      \path[->] (100) edge[bend left] node[above] {$3$} (0);
\end{scope}
\end{tikzpicture}
\caption{\ }
\end{center}
\end{subfigure}
\caption{\ru2.}
\label{r2 figure2}
\end{figure}

\begin{figure}[H]
\begin{center}
\begin{subfigure}[b]{0.49\textwidth}
\begin{center}
\begin{tikzpicture}[baseline=(3.north)]
\begin{scope}[every node/.style={circle,thick,draw,minimum size=1pt,inner sep=2,font=\small}]
    \node[minimum width=23pt,label={[label distance=-16pt]below:non-sponsor}] (1) at (-2,0) {$\Delta$};
    \node[fill] (1') at (-1,0) {};
    \node[fill,label={above:$w$}] (2) at (0,0) {};
    
    \node[fill,label={above:$v$}] (30) at (1,0.25) {};
    \node[minimum width=23pt,label={above:$u$}] (50) at (2,0.5) {$\Delta$};
    
    \node[fill] (31) at (1,-0.25) {};
    \node[minimum width=23pt] (51) at (2,-0.5) {$\Delta$};
    
    \node[draw=none] (100) at (3,-0.1) {sponsors};
\end{scope}

\begin{scope}[every edge/.style={draw=black,thick}]    
    \path (1) edge (2);
	
	\path (2) edge (50);
	\path (2) edge (51);
	
    \path[->] (50) edge[bend right=40] node[above] {$2$} (2);
    \path[->] (51) edge[bend left=40] node[below] {$2$} (2);
\end{scope}
\end{tikzpicture}
\vspace*{-0.8cm}
\caption{$(1,1,1)$-path sponsor.}
\end{center}
\end{subfigure}
\begin{subfigure}[b]{0.49\textwidth}
\begin{center}
\begin{tikzpicture}[baseline=(3.north)]
\begin{scope}[every node/.style={circle,thick,draw,minimum size=1pt,inner sep=2,font=\small}]
    \node[minimum width=23pt,label={[label distance=-16pt]below:non-sponsor},label={above:$u$}] (1) at (-2,0) {$\Delta$};
    \node[fill,label={above:$v$}] (1') at (-1,0) {};
    \node[fill,label={above:$w$}] (2) at (0,0) {};
    
    \node[fill] (30) at (1,0.25) {};
    \node[minimum width=23pt] (50) at (2,0.5) {$\Delta$};
    
    \node[fill] (31) at (1,-0.25) {};
    \node[minimum width=23pt] (51) at (2,-0.5) {$\Delta$};
    
    \node[draw=none] (100) at (3,-0.1) {sponsors};
\end{scope}

\begin{scope}[every edge/.style={draw=black,thick}]    
    \path (1) edge (2);
	
	\path (2) edge (50);
	\path (2) edge (51);
	
    \path[->] (1) edge[bend left=40] node[above] {$1$} (2);
\end{scope}
\end{tikzpicture}
\vspace*{-0.8cm}
\caption{}
\end{center}
\end{subfigure}
\begin{subfigure}[b]{0.49\textwidth}
\begin{center}
\begin{tikzpicture}[baseline=(3.north)]
\begin{scope}[every node/.style={circle,thick,draw,minimum size=1pt,inner sep=2,font=\small}]
    \node[minimum width=23pt,label={above:$u$}] (1) at (-2,0) {$\Delta$};
    \node[fill,label={above:$v$}] (1') at (-1,0) {};
    \node[fill,label={above:$w$}] (2) at (0,0) {};
    
    \node[minimum width=23pt] (50) at (2,0.5) {$2^+$};
    
    \node[minimum width=23pt] (51) at (2,-0.5) {$3^+$};
\end{scope}

\begin{scope}[every edge/.style={draw=black,thick}]    
    \path (1) edge (2);
	
	\path (2) edge (50);
	\path (2) edge (51);
	
    \path[->] (1) edge[bend left=40] node[above] {$\frac32$} (2);
\end{scope}
\end{tikzpicture}
\caption{\ }
\end{center}
\end{subfigure}
\begin{subfigure}[b]{0.49\textwidth}
\centering
\begin{tikzpicture}[baseline=(0.north)]
\begin{scope}[every node/.style={circle,thick,draw,minimum size=1pt,inner sep=2,font=\small}]
    \node[minimum width=23pt,label={above:$w$}] (0) at (1,2) {$4$};
    \node[minimum width=23pt,label={above:$u$}] (100) at (-1,2) {$9^+$};
    \node[fill,label={above:$v$}] (3) at (0,2) {};
\end{scope}

\begin{scope}[every edge/.style={draw=black,thick}]
    \path (0) edge (100);
      \path[->] (100) edge[bend left=40] node[above] {$\frac23$} (0);
\end{scope}
\end{tikzpicture}
\caption{\ }
\end{subfigure}
\caption{\ru3.}
\label{r3 figure2}
\end{center}
\end{figure}

\subsection{Verifying that charges on each vertex are non-negative} \label{verification2}

Let $\mu^*$ be the assigned charges after the discharging procedure. In what follows, we prove that: $$\forall u \in V(G_2), \mu^*(u)\ge 0.$$

Let $u\in V(G_2)$.

\textbf{Case 1: }If $d(u)=2$, then recall that $\mu(u)=5\cdot 2 - 14 = - 4$.\\
Recall that there exists no $3^+$-path due to \Cref{3-path lemma2}. So, $u$ must lie on a $1$-path or a $2$-path.

If $u$ is on a $1$-path, then it has two $3^+$-neighbors which give it 2 each by \ru0. Thus,
$$ \mu^*(u)= -4 + 2\cdot 2 = 0.$$

If $u$ is on a $2$-path, then $u$ receives 4 from an adjacent sponsor by \ru1(ii), or it receives $\frac72+\frac12 = 4$ from an adjacent non-sponsor and a distance 2 sponsor respectively by \ru1(i) and \ru1(ii). Thus,
$$ \mu^*(u)= -4 + 4 = 0.$$

\textbf{Case 2: }If $d(u)=3$, then recall that $\mu(u)=5\cdot 3 - 14 = 1$.\\
Observe that $u$ only gives charge away by \ru0 (charge 2 to each $2$-neighbor).

If $u$ is a $(1,1,1)$-vertex, then the other endvertices of the 1-paths incident to $u$ are all $\Delta$-vertices due to \Cref{111-10}. As a result, $u$ receives $2$ from each of its two sponsors and $1$ from the non-sponsor $\Delta$-vertex by \ru3(i) and \ru3(ii). Hence,
$$ \mu^*(u)= 1 -3\cdot 2 + 2\cdot 2 + 1 = 0.$$

If $u$ is a  $(1,1,0)$-vertex with a $8^+$-neighbor, then it receives $3$ from its $8^+$-neighbor by \ru2(ii). Thus,
$$ \mu^*(u)= 1 -2\cdot 2 + 3 = 0.$$

If $u$ is a  $(1,1,0)$-vertex with an $7^-$-neighbor ($7\leq \Delta-3$ since $\Delta\geq 10$), then it receives $\frac32$ by \ru3(iii) from each of the other endvertices of its incident $1$-paths due to \Cref{8-011-10}. Thus,
$$ \mu^*(u)= 1 -2\cdot 2 + 2\cdot \frac32 = 0.$$

If $u$ is a $(1,0,0)$-vertex with a $5^+$-neighbor, then it receives at least $1$ from that neighbor by \ru2. Thus,
$$ \mu^*(u)\geq 1 -2 + 1 = 0.$$

If $u$ is a $(1,0,0)$-vertex with two $(\db34)$-neighbors, then it receives $\frac32$ by \ru3(i) from the other endvertex of its incident 1-path due to \Cref{4-00(-4)1-10}. So,
$$ \mu^*(u)= 1 -2 + \frac32 = \frac12.$$

If $u$ is a $(0,0,0)$-vertex, then
$$ \mu^*(u) = \mu(u) = 1.$$

\textbf{Case 3: }If $d(u)=4$, then recall that $\mu(u)=5\cdot 4 - 14 = 6$.\\
Observe that $u$ only gives charge by \ru0 (charge 2 to each $2$-neighbor).

If $u$ is a $(1,1,1,1)$-vertex, then at least three of the four other endvertices of the 1-paths incident to $u$ are $(\Delta-1)^+$-vertices (which are $9^+$-vertices since $\Delta\geq 10$) due to \Cref{1111}. As a result, $u$ receives $\frac23$ from each of the $9^+$-endvertex by \ru3(iv). Hence,
$$ \mu^*(u)\geq 6 -4\cdot 2 + 3\cdot \frac23 = 0.$$

If $u$ is a $(1^-,1^-,1^-,0)$-vertex, then
$$ \mu^*(u) \geq 6 - 3\cdot 2 = 0.$$

\textbf{Case 3: }If $5\leq d(u)\leq 7$, then $u$ can give 2 to each $2$-neighbor by \ru0 or 1 to each $3$-neighbor by \ru2(i). Thus, at worst we get
$$ \mu^*(u)\geq 5d(u) - 14 - 2d(u) \geq 3\cdot 5 - 14 = 1.$$

\textbf{Case 4: }If $8\leq d(u)\leq \Delta-1$, then $u$ gives at most $3$ along each incident path by \ru2(ii). Thus, at worst we get
$$ \mu^*(u)\geq 5d(u) - 14 - 3d(u) \geq 2\cdot 8 - 14 = 2.$$

\textbf{Case 5: }If $d(u)=\Delta$, then we distinguish the following cases.
\begin{itemize}
\item If $u$ is neither a $2$-path sponsor nor a $(1,1,1)$-path sponsor, then observe that $u$ gives away at most $\frac72$ along an incident path by \ru1(i) or a combination of \ru0 and \ru3(iii). So at worst, $$\mu^*(u)\geq 5\Delta - 14 - \frac72\Delta \geq \frac32\cdot 10 - 14 =1.$$

\item If $u$ is a $2$-path sponsor but not a $(1,1,1)$-path sponsor, then $u$ gives $4+\frac12=\frac92$ to its unique sponsored incident $2$-path by \ru1(ii). For the other incident paths, it gives at most $\frac72$ like above. So,
$$ \mu^*(u)\geq 5\Delta - 14 - \frac92 - \frac72(\Delta-1) \geq \frac32\cdot 10 - 14 - \frac92 + \frac72 = 0.$$

\item If $u$ is a $(1,1,1)$-path sponsor but not a $2$-path sponsor, then $u$ gives $2+2=4$ to the unique incident $(1,1,1)$-path containing its assigned $(1,1,1)$-vertex $v$: $1$ to the $2$-neighbor by \ru0 and $1$ to $v$ by \ru3(i). Once again, $u$ gives at most $\frac72$ to the other incident paths. So,
$$ \mu^*(u)\geq 5\Delta - 14 - 4 - \frac72(\Delta-1) \geq \frac32\cdot 10 - 14 - 4 + \frac72 = \frac12.$$

\item If $u$ is both a $2$-path sponsor and a $(1,1,1)$-path sponsor, then $u$ gives $\frac92$ to its unique sponsored $2$-path and $4$ to its unique assigned $(1,1,1)$-vertex like above. 

Now, let us consider the other $\Delta-2$ paths incident to $u$. Observe that when $u$ gives $\frac72$ along an incident path either by \ru1(i) or by a combination of \ru0 and \ru3(iii), that path must be a $1^+$-path where the vertex at distance $2$ from $u$ is a $3^-$-vertex. Due to \Cref{weird cases lemma2}, $u$ never has to give $\frac72$ to each of the $\Delta-2$ paths. As a result, there exists one path to which $u$ gives at most 3. So at worst,
$$ \mu^*(u)\geq 5\Delta - 14 - \frac92 - 4 - 3 - \frac72(\Delta-3) \geq \frac32\cdot 10 - 14 -\frac92 - 4 - 3 + \frac{21}2 = 0. $$ 
\end{itemize}

We obtain a non-negative amount of charge on each vertex, which is impossible since the total amount of charge is negative. As such, $G_2$ cannot exist. That concludes the proof of \Cref{main theorem2}.

\section*{Acknowledgements}
This work was partially supported by the grant HOSIGRA funded by the French National Research
Agency (ANR, Agence Nationale de la Recherche) under the contract number ANR-17-CE40-0022.

\bibliographystyle{plain}

\end{document}